\documentclass{article}
\usepackage[english]{babel}
\usepackage[utf8]{inputenc}
\usepackage{amsmath}
\usepackage{graphicx}
\usepackage{amsfonts}
\usepackage{enumitem}
\usepackage{amssymb}
\usepackage[colorinlistoftodos]{todonotes}
\usepackage{geometry}
\usepackage{amsthm}


\newcommand{\norm}[1]{\left\lVert#1\right\rVert}

\newcommand{\paren}[1]{\left( #1 \right)}

\newcommand{\set}[1]{\left\{ #1 \right\}}
\newcommand{\setcond}[2]{\left\{ #1 \;\middle\vert\; #2 \right\}}

\newcommand{\RR}{\mathbb{R}}

\newcommand{\NN}{\mathbb{N}}

\newtheorem{thm}{Theorem}[section]
\newtheorem{lem}[thm]{Lemma}

\newtheorem{conj}[thm]{Conjecture}

\theoremstyle{definition}
\newtheorem{defn}[thm]{Definition}

\title{Difference sets in $\RR^d$}
\author{David Conlon\thanks{Department of Mathematics, Caltech, Pasadena, CA 91125, USA. Email: {\tt dconlon@caltech.edu}. Research supported by NSF Award DMS-2054452.} \and Jeck Lim\thanks{Department of Mathematics, Caltech, Pasadena, CA 91125, USA. Email: {\tt jlim@caltech.edu}. Research partially supported by the NUS Overseas Graduate Scholarship.} }
\date{}

\begin{document}
\maketitle

\begin{abstract}
Let $d \geq 2$ be a natural number. We show that 
$$|A-A| \geq \left(2d-2 + \frac{1}{d-1}\right)|A|-(2d^2-4d+3)$$ 
for any sufficiently large finite subset $A$ of $\RR^d$ that is not contained in a translate of a hyperplane. By a construction of Stanchescu, this is best possible and thus resolves an old question first raised by Uhrin.
\end{abstract}

\section{Introduction}

Given two subsets $A, B$ of an abelian group, the sumset $A + B$ is defined by
\[A + B = \{a + b : a \in A, b \in B\}\]
and the difference set $A - B$ is defined similarly. 
One of the fundamental results in additive combinatorics is Freiman's structure theorem, the statement that any finite set of integers $A$ with small doubling, that is, with $|A + A| \leq K|A|$ for some fixed constant $K$, is contained in a generalised arithmetic progression of small size and dimension. The first step in Freiman's original proof~\cite{F73} of this theorem is a simple lemma showing that if 
$A$ is a finite $d$-dimensional subset of $\RR^d$, then
$$|A+A| \geq (d+1)|A| - d(d+1)/2,$$
where we say that a subset $A$ of $\RR^d$ is \emph{$k$-dimensional} and write $\dim(A) = k$ if the dimension of the affine subspace spanned by $A$ is $k$. Freiman's result is tight, as may be seen by considering the union of $d$ parallel arithmetic progressions with the same common difference.

Surprisingly, the analogous problem of estimating $|A-A|$ for $d$-dimensional subsets $A$ of $\RR^d$ has remained open, despite first being raised by Uhrin~\cite{U80} in 1980 because of connections to the geometry of numbers and then reiterated many times (see, for example,~\cite{CL07, FHU90, R94, S98, S01}). 
However, the first few cases are well understood. Indeed, for $d = 1$, it is an elementary observation that $|A-A| \geq 2|A|-1$, which is tight for arithmetic progressions, while, for $d = 2$, the bound $|A-A| \geq 3|A|-3$, tight for the union of two parallel arithmetic progressions with the same length and common difference, was proven by Freiman, Heppes and Uhrin~\cite{FHU90}. More generally, they showed that if $A$ is a finite $d$-dimensional subset of $\RR^d$, then
$$|A-A| \geq (d+1)|A| - d(d+1)/2,$$
in analogy with Freiman's result on $|A+A|$. This estimate was later generalised by Ruzsa~\cite{R94}, who showed that if $A, B \subset \RR^d$ are finite sets such that $|A|\geq|B|$ and $\dim(A + B) = d$, then 
\begin{align}
    |A + B| \geq |A| + d|B| - d(d + 1)/2. \label{eqn:ruzsa}
\end{align}
Finally, for $d = 3$, Stanchescu~\cite{S98}, making use of this inequality of Ruzsa, proved that $|A-A| \geq 4.5|A| - 9$ for any finite $3$-dimensional subset $A$ of $\RR^3$. This is again tight, with the example now being a parallelogram of four parallel arithmetic progressions with the same length and common difference. 

For higher dimensions, the best known construction is due to Stanchescu~\cite{S01} and comes from a collection of $2d-2$ carefully placed parallel arithmetic progressions with the same length and common difference. More precisely, set $T = \{e_0, e_1, \dots, e_{d-2}\}$, where $e_0$ is the origin and $\{e_1, \dots, e_d\}$ is the standard basis for $\RR^d$, and, for any natural number $k$, let $A_k = (T \cup (a_k - T)) + P_k$, 
where $a_k = e_d - k e_{d-1}$ and $P_k = \{e_0, e_{d-1}, 2e_{d-1}, \dots, (k-1) e_{d-1}\}$. Worked out carefully, this construction satisfies
$$|A_k - A_k| = \left(2d - 2+\frac{1}{d-1}\right)|A_k| - (2d^2-4d+3).$$
Supplanting an earlier conjecture of Ruzsa~\cite{R94}, 
Stanchescu proposed that this is best possible. 

\begin{conj}[Stanchescu~\cite{S01}] \label{conj:stan}
Suppose $d \geq 2$ and $A \subset \RR^d$ is a finite set such that $\dim(A) = d$. Then 
$$|A - A| \geq \left(2d - 2+\frac{1}{d-1}\right)|A| - (2d^2-4d+3).$$
\end{conj}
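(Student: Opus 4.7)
I would attempt a proof by induction on the dimension $d$, with the base case $d = 2$ supplied by the theorem of Freiman, Heppes and Uhrin. Write $f(d) = 2d - 2 + \tfrac{1}{d-1}$ and $g(d) = 2d^2 - 4d + 3$ for the target coefficients. For the inductive step, the natural strategy is to project $A$ onto a line via a linear functional $\pi$, obtaining hyperplane slices $A_i = A \cap \pi^{-1}(h_i)$ at heights $h_1 < \cdots < h_m$. The slice of $A - A$ at height $h$ contains $A_i - A_j$ for every pair $(i,j)$ with $h_i - h_j = h$, which yields the key lower bound
\[
|A - A| \;\geq\; \sum_{h \in \pi(A) - \pi(A)} \max_{h_i - h_j = h} |A_i - A_j|,
\]
after which one estimates each summand by the inductive hypothesis on the $(d-1)$-dimensional slices or by Ruzsa's inequality.

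A sensible first move is to choose $\pi$ to be an outer normal to a facet of the convex hull of $A$, so that one extreme slice, say $A_1$, spans a $(d-1)$-dimensional affine hyperplane. The inductive hypothesis then gives $|A_1 - A_1| \geq f(d-1)|A_1| - g(d-1)$, contributing to the slice of $A - A$ at height $0$, while Ruzsa's inequality gives $|A_i - A_1| \geq |A_i| + (d-1)|A_1| - \binom{d}{2}$ whenever $A_i - A_1$ is full-dimensional in the parallel hyperplane, contributing to the slice at height $h_i - h_1$. Summing these and using $\sum_i |A_i| = |A|$ already yields a lower bound of the desired shape in terms of the $|A_i|$, which one would then optimize to recover $f(d)|A| - g(d)$.

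The combinatorial heart of the argument is then to balance the inductive contribution from $|A_1 - A_1|$ against the Ruzsa contributions from $|A_i - A_1|$ for $i \geq 2$. A natural case split is on the ratio $|A_1|/|A|$: when the facet slice $A_1$ is a large fraction of $|A|$, the inductive bound on $|A_1 - A_1|$ dominates; when $A_1$ is small, the many $|A_i - A_1|$ terms --- together with the 1D bound $|\pi(A) - \pi(A)| \geq 2m - 1$, which forces many distinct heights of $A - A$ to carry their own independent lower bounds --- must make up the deficit.

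The main obstacle, in my view, is matching the exact constant $\tfrac{1}{d-1}$. Crude combinations of Ruzsa with induction tend to produce coefficients around $d + 1$ rather than the desired $2d - 2 + \tfrac{1}{d-1}$, so closing the gap requires a precise accounting of which lower bounds on slices of $A - A$ really add (because the associated slices lie in genuinely distinct parallel hyperplanes or span disjoint subspaces) versus which merely overlap. I would expect the technical bulk of the proof to lie in tracking these disjointness relations through the induction, perhaps via a sharpening of Ruzsa's inequality that quantifies the exact gap witnessed by Stanchescu's extremal construction, with the borderline regime being exactly that in which $A$ sits essentially in two parallel $(d-1)$-dimensional hyperplanes, joined by an arithmetic progression.
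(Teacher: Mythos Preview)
Your outline correctly identifies the broad architecture---induction on $d$, hyperplane slicing, Ruzsa's inequality for cross terms, induction for the diagonal term---and you are honest that the crux is pushing the leading coefficient from roughly $d+1$ up to $2d-2+\tfrac{1}{d-1}$. But this is exactly where the plan stops short of a proof, and the paper's argument shows that two substantial ingredients, neither of which you have, are what close the gap.

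First, the paper does not slice along an arbitrary facet normal. It invokes a structure lemma of Mudgal (Lemma~\ref{lem:mudgal}): if $|A+A|\le c|A|$ then all but $O(|A|^{1-\sigma})$ points of $A$ lie on $O(|A|^{1-\sigma})$ parallel lines, each rich. This fixes a \emph{specific} direction $l$ and guarantees that the hyperplane slices parallel to $l$ themselves have controlled line structure. Without this, your slice $A_1$ could be an arbitrary $(d-1)$-dimensional set, and then the inductive bound $|A_1-A_1|\ge f(d-1)|A_1|-g(d-1)$ combined with Ruzsa on $|A_i-A_1|$ really does top out near $(d+1)|A|$, as you yourself note. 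The ``precise accounting of disjointness'' you hope for cannot by itself manufacture the missing $d-3$ in the coefficient.

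Second, for the cross terms $|A_1-A_2|$ the paper needs more than Ruzsa. It proves an asymmetric version of a theorem of Stanchescu (Theorem~\ref{thm:asym}): if $A_1$ is $(d-1)$-dimensional and covered by more than $d-1$ but at most $|A_1|/4$ lines parallel to $l$, then $|A_1-A_2|\ge |A_1|+(d-\tfrac{2}{3})|A_2|-o(|A_1|)$. This dichotomy---either the slice is on $d-1$ lines (and Lemma~\ref{lem:dlines} gives a strong bound on $|A_1-A_1|$) or the cross term beats Ruzsa---is what drives the two-hyperplane case (Lemma~\ref{lem:2planes}) and ultimately the whole induction. Your proposal has no analogue of this tool.

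Finally, note that the paper proves the conjecture only for $|A|$ sufficiently large in terms of $d$ (Theorem~\ref{thm:main}); the full Conjecture~\ref{conj:stan} for all $|A|$ remains open. Several steps in the paper genuinely absorb error terms into $o(|A|)$, so an attempt to get the exact inequality for every $|A|$ by this route would require further ideas.
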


Until very recently, little was known about this conjecture for $d \geq 4$ besides the result of Freiman, Heppes and Uhrin~\cite{FHU90}. However, the situation was considerably improved by Mudgal~\cite{M20}, who showed that 
$$|A - A| \geq (2d - 2)|A| - o(|A|)$$
for any finite $d$-dimensional subset $A$ of $\RR^d$. 
Our main result, which builds on both Mudgal's work and earlier work of Stanchescu~\cite{S98, S10}, is a proof of Conjecture~\ref{conj:stan} in full provided only that $|A|$ is sufficiently large in terms of $d$, essentially resolving the problem of minimising the value of $|A-A|$ over all $d$-dimensional sets $A$ of a given size.

\begin{thm} \label{thm:main}
Suppose $d \geq 2$ and $A \subset \RR^d$ is a finite set such that $\dim(A) = d$. Then, provided $|A|$ is sufficiently large in terms of $d$, 
$$|A - A| \geq \left(2d - 2+\frac{1}{d-1}\right)|A| - (2d^2-4d+3).$$
\end{thm}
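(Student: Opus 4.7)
The natural strategy is induction on $d$, with the base case $d = 2$ being the theorem of Freiman, Heppes and Uhrin. For the inductive step, I would slice $A$ with parallel hyperplanes and combine the inductive hypothesis applied to each slice with a Ruzsa-type bound across slices.

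Concretely, pick a nonzero direction $v \in \RR^d$ and let $A = A_0 \sqcup A_1 \sqcup \cdots \sqcup A_{m-1}$ be the decomposition according to the level of $v \cdot x$, with $|A_i| = n_i$. Each slice lies in a $(d-1)$-dimensional affine subspace, and one has
\[
    |A - A| \ge \sum_{i} |A_i - A_i| + 2 \sum_{k \ge 1} \Bigl| \bigcup_{j - i = k} (A_j - A_i) \Bigr|,
\]
since the terms on the right lie in pairwise disjoint hyperplanes. For the within-slice sum, the inductive hypothesis applied to each $(d-1)$-dimensional slice gives $|A_i - A_i| \ge (2d - 4 + \tfrac{1}{d-2}) n_i - (2d^2 - 8d + 9)$. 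For the between-slice terms, Ruzsa's inequality (applied within each hyperplane, of affine dimension $d-1$) gives bounds of the form $|A_j - A_i| \ge \max(n_i, n_j) + (d-1) \min(n_i, n_j) - \binom{d}{2}$ whenever $\dim(A_j - A_i) = d - 1$.

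Summing and optimising over $v$, the aim is to recover the target coefficient $2d - 2 + \tfrac{1}{d-1}$. In Stanchescu's extremal example, the natural direction $v$ is orthogonal to the two parallel hyperplanes containing the $2d-2$ arithmetic progressions, so one expects the bound to be saturated when the slicing is essentially trivial, with two slices of equal size. The fractional $\tfrac{1}{d-1}$ factor should emerge from the interaction of the inductive $\tfrac{1}{d-2}$ factor on the within-slice terms with the $(d-1)$ coefficient in Ruzsa's inequality on the cross terms; verifying that this algebra actually produces the right constant is one thing I would need to check carefully.

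The main obstacle will be handling degenerate slicings. A slice $A_i$ may have dimension less than $d-1$, so the inductive hypothesis does not directly apply; cross-differences $A_j - A_i$ may be concentrated in a lower-dimensional subspace, weakening the Ruzsa bound; or the sizes $n_i$ may be very uneven. For these cases, I would use Mudgal's unconditional estimate $|A - A| \ge (2d-2)|A| - o(|A|)$ as a robust baseline and combine it with a stability analysis: if $|A-A|$ comes within $\tfrac{|A|}{d-1}$ of the conjectured minimum, then $A$ must closely resemble Stanchescu's construction, namely two parallel $(d-1)$-dimensional slices each consisting of $d-1$ long parallel arithmetic progressions in a common direction. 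A direct verification in this near-extremal regime, in the spirit of Stanchescu's own argument for $d=3$, should then complete the proof.
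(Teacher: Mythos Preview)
Your overall framework---induction on $d$, slicing by parallel hyperplanes, combining the inductive hypothesis on slices with Ruzsa across slices, and falling back on Mudgal plus a stability argument for the near-extremal case---is exactly the scaffold the paper uses. But the step you flag as ``one thing I would need to check carefully'' is a genuine gap, not just bookkeeping. Take the critical two-slice case with $|A_1| \approx |A_2| \approx |A|/2$ where $A_1$ is $(d-1)$-dimensional but \emph{not} covered by $d-1$ parallel lines. Induction gives only $|A_1-A_1| \ge (2d-4+\tfrac{1}{d-2})|A_1|$, and Ruzsa gives $|A_1-A_2| \ge |A_1| + (d-1)|A_2| - \binom{d}{2}$. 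Adding $|A_1-A_1| + 2|A_1-A_2|$ yields at best $(2d-2+\tfrac{1}{2(d-2)})|A|$, which for $d \ge 4$ is strictly smaller than the target $(2d-2+\tfrac{1}{d-1})|A|$. So the ``interaction'' you hope for between the $\tfrac{1}{d-2}$ and the Ruzsa coefficient does not produce the right constant; some new input is required precisely here.

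The paper supplies that input in the form of an asymmetric version of Stanchescu's structure theorem (their Theorem~2.4): if $A_1$ is $(d-1)$-dimensional, $|A_1| \ge |A_2|$, and $A_1$ is covered by $s$ parallel lines with $d-1 < s \le |A_1|/4$, then $|A_1 - A_2| \ge |A_1| + (d-\tfrac{2}{3})|A_2| - o(|A_1|)$, improving the Ruzsa coefficient from $d-1$ to $d-\tfrac{2}{3}$. This improvement is exactly what closes the arithmetic gap above, and proving it is itself substantial---it uses a Grynkiewicz--Serra two-dimensional lemma together with a sequence of Freiman-type compressions. A second point: the paper does not use Mudgal's quantitative $(2d-2)|A|-o(|A|)$ bound as a baseline. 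Instead it uses Mudgal's \emph{structural} lemma, which says that a set with small doubling is, up to $O(|A|^{1-\sigma})$ points, covered by parallel lines each containing at least $|A|^{\sigma}$ points. This is what reduces the problem to the setting where every relevant line meets $A$ in at least $4d$ points, after which the hyperplane slicing and the asymmetric Stanchescu theorem finish the job. Your proposed stability route would, if carried out, essentially have to rediscover both of these ingredients.
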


We begin our proof of Theorem~\ref{thm:main} in the next section with a result that we believe to be of independent interest, an extension of a result of Stanchescu~\cite{S10} about the structure of $d$-dimensional subsets $A$ of $\RR^d$ with doubling constant smaller than $d + 4/3$ to asymmetric sums $A + B$.

\section{An asymmetric version of a theorem of Stanchescu}

Our starting point is with the following theorem of Stanchescu~\cite{S10} (see also~\cite{S08} for the $d = 3$ case).

\begin{thm}[Stanchescu~\cite{S10}] \label{thm:Stan}
Suppose $d \geq 2$ and $A \subset \mathbb{R}^d$ is a finite set with $\dim(A) = d$. If $|A| > 3 \cdot 4^{d}$ and $|A + A| < (d + 4/3)|A| - \frac{1}{6}(3d^2 +5d + 8)$, then $A$ can be covered by $d$ parallel lines.
\end{thm}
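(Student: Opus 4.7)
My approach would be induction on $d$, with the analogous statement in dimension $d - 1$ serving as the inductive hypothesis. The base case $d = 2$ asserts that any $2$-dimensional finite $A \subset \RR^2$ with $|A+A| < \tfrac{10}{3}|A| - 5$ (and $|A|$ sufficiently large) is covered by two parallel lines; I would deduce this from classical two-dimensional structural results in the spirit of Freiman, since the optimal configuration on two parallel lines gives exactly $|A+A| = 3|A| - 3$, and any three non-collinear extreme points in the convex hull of $A$ force several additional extremal sums in $A + A$, pushing $|A+A|$ comfortably above the hypothesis.

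For the inductive step, fix a direction $v \in \RR^d$ and slice $A = A_0 \sqcup A_1 \sqcup \dots \sqcup A_m$ into its nonempty intersections with hyperplanes perpendicular to $v$. Since $A + A$ distributes over $2m + 1$ parallel hyperplanes, one has
\[
|A + A| \;\ge\; \sum_{k=0}^{2m} \max_{i + j = k} |A_i + A_j|.
\]
For each slice I would apply Freiman's lower bound $|A_i + A_i| \ge (\dim(A_i) + 1)|A_i| - \binom{\dim(A_i) + 1}{2}$, and for cross-slice sums Ruzsa's asymmetric bound $|A_i + A_j| \ge |A_i| + \dim(A_i + A_j)|A_j| - \binom{\dim(A_i + A_j) + 1}{2}$ when $|A_i| \ge |A_j|$. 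The direction $v$ would be chosen to make the slicing as favourable as possible, for instance to maximise the size of the largest slice or to align with a popular difference of $A$.

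The case analysis then splits. If no slice is $(d-1)$-dimensional, so every slice has dimension at most $d - 2$, then summing the slice-wise Freiman bounds together with the cross-slice Ruzsa bounds yields a total of at least $(d + 4/3)|A| - O(1)$, contradicting the hypothesis. If, on the other hand, some slice $A_{i_0}$ is $(d-1)$-dimensional, the inductive hypothesis applies to it and forces $A_{i_0}$ onto $d - 1$ parallel lines. A further accounting -- showing that all remaining slices must be very small and that their points must align with one additional parallel line direction -- then pins $A$ to $d$ parallel lines.

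The main obstacle is precise constant tracking: the $\tfrac{1}{3}|A|$ slack above the generic Freiman bound is tight, so the slicing step cannot afford to lose much. In particular, handling the interactions between a large $(d-1)$-dimensional slice $A_{i_0}$ and its smaller neighbours seems to require an \emph{asymmetric} version of the inductive hypothesis -- a statement about $|A + B|$ -- exactly as suggested by the heading of the following section. Choosing the slicing direction $v$ wisely is another delicate step, needed to ensure that we always land in one of the two cases above rather than an intermediate regime.
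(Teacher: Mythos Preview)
The paper does not itself prove Theorem~\ref{thm:Stan}; it is quoted from~\cite{S10}, and what the paper proves is the asymmetric extension Theorem~\ref{thm:asym}. That proof, which the authors say draws on Stanchescu's own method, does not slice $A$ into parallel hyperplanes. Instead it fixes a direction $l$, lets $r$ be the number of lines parallel to $l$ meeting $A$, and uses iterated \emph{compressions} (Lemmas~\ref{lem:compress} and~\ref{lem:reduce}): compressions preserve $r$ while not increasing $|A+B|$, and after enough of them one reaches the rigid shape $A' = A_1 \cup \{e_d\}$ with $A_1$ lying in a single hyperplane and meeting exactly $r-1$ of the lines, so that induction on $d$ applies cleanly to $A_1$. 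The $d=2$ base (Lemma~\ref{lem:base}) is handled via the Grynkiewicz--Serra inequality (Lemma~\ref{lem:gs}), not by counting extremal sums from convex-hull vertices.

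Your slicing scheme has a genuine structural gap at precisely the step you call ``a further accounting''. Suppose induction tells you that the large slice $A_{i_0}$ is covered by $d-1$ lines parallel to some direction $w$ lying in the hyperplane $H_{i_0}$. A line parallel to $w$ through any point of another slice $A_j$, $j\neq i_0$, sits in a different translate of $H_{i_0}$ and is therefore a \emph{new} line. Thus, to cover $A$ by only $d$ lines parallel to $w$, all of $A\setminus A_{i_0}$ would have to lie on a single line, forcing $m=1$ and the second slice to be collinear. Nothing in the Freiman/Ruzsa slice bounds forces this: the hypothesis $|A+A|<(d+4/3)|A|-O(1)$ is compatible with two slices that are both large and $(d-1)$-dimensional (indeed, the extremal examples for the paper's main theorem are of exactly this shape), and in that regime your argument stalls. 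The compression approach sidesteps this because it tracks the invariant $r$ directly through the reduction rather than trying to rebuild the $d$ covering lines from a single slice; the step you suspected would need ``an asymmetric version of the inductive hypothesis'' is in fact carried by the compression lemma, not by a sharpened sumset inequality.
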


By considering the set $A = A_0 \cup \{e_3, \dots, e_d\}$ with $A_0 = \{i e_1 + j e_2 : 0 \leq i < n, 0 \leq j \leq 2\}$ for some natural number $n$, which satisfies $|A+A| = (d + 4/3)|A| - \frac{1}{6}(3d^2 +5d + 8)$ and yet cannot be covered by $d$ parallel lines, we see that Theorem~\ref{thm:Stan} is tight. The main result of this section is an extension of Theorem~\ref{thm:Stan} to asymmetric sums $A + B$. We begin with the two-dimensional case, whose proof relies in a critical way on the following result of Grynkiewicz and Serra~\cite[Theorem 1.3]{GS10}.

\begin{lem}[Grynkiewicz--Serra~\cite{GS10}] \label{lem:gs}
Let $A,B\subset \RR^2$ be finite sets, let $l$ be a line, let $r_1$ be the number of lines parallel to $l$ which intersect $A$ and let $r_2$ be the number of lines parallel to $l$ that intersect $B$. Then
$$|A + B| \geq \paren{\frac{|A|}{r_1}+\frac{|B|}{r_2}-1}(r_1+r_2-1).$$
\end{lem}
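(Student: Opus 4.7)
The plan is to reduce the claim to one-dimensional sumset bounds by projecting transverse to $l$. After an affine change of coordinates I may assume that $l$ is the $x$-axis; let $y = \alpha_1 < \cdots < \alpha_{r_1}$ be the horizontal lines meeting $A$ and $y = \beta_1 < \cdots < \beta_{r_2}$ those meeting $B$, and decompose $A = A^{(1)} \sqcup \cdots \sqcup A^{(r_1)}$ and $B = B^{(1)} \sqcup \cdots \sqcup B^{(r_2)}$ according to these rows. Set $a_i = |A^{(i)}|$ and $b_j = |B^{(j)}|$, so that $\sum_i a_i = |A|$ and $\sum_j b_j = |B|$. Two one-dimensional facts are immediate: first, the vertical projection of $A+B$ is the sumset of $\{\alpha_i\}$ and $\{\beta_j\}$, of cardinality at least $r_1 + r_2 - 1$; second, on the horizontal line $y = \alpha_i + \beta_j$, the translate $A^{(i)} + B^{(j)}$ sits inside $A+B$ and has size at least $a_i + b_j - 1$ by Freiman's one-dimensional bound.

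Combining these two observations gives
\[ |A + B| \;\geq\; \sum_{\gamma} \max_{\alpha_i + \beta_j = \gamma} (a_i + b_j - 1), \]
with the outer sum ranging over the $\geq r_1 + r_2 - 1$ values of $\gamma$ in the projected sumset. The remaining task is purely combinatorial: show that this lower bound is at least $(|A|/r_1 + |B|/r_2 - 1)(r_1 + r_2 - 1)$. I would attempt an induction on $r_1 + r_2$, with the base case $r_1 = r_2 = 1$ being immediate from the one-dimensional bound. For the inductive step, peel off an extremal row, say $A^{(r_1)}$ when it is no larger than its row-average and the symmetric choice on $B$ otherwise, and apply the inductive hypothesis to $(A \setminus A^{(r_1)}, B)$, which has $r_1 - 1 + r_2$ rows. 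The topmost row of $A+B$ lies on $y = \alpha_{r_1} + \beta_{r_2}$ and is reachable only via the peeled pair, so contributes at least $a_{r_1} + b_{r_2} - 1$ genuinely new elements to $A+B$.

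The main difficulty will be closing this induction: a direct calculation shows that the naive peeling supplies the tight inductive bound only when every row already has exactly the average size, and otherwise one loses a term proportional to the deviation $|B|/r_2 - b_{r_2}$ (or its $A$-analogue). To absorb this, one has to either choose the peeled row more carefully, guided by those deviations, or extract additional slack from the fact that interior lines $y = \gamma$ of the projected sumset are typically reachable by several pairs $(i,j)$, giving several candidates for the maximum in the displayed inequality. A plausible alternative I would pursue in parallel is to sidestep induction by proving directly the assignment statement that there exist $r_1 + r_2 - 1$ pairs $(i,j)$ with pairwise distinct sums $\alpha_i + \beta_j$ and total weight $\sum (a_i + b_j) \geq (r_1 + r_2 - 1)(|A|/r_1 + |B|/r_2)$; this reduces to a bipartite matching / LP-duality problem that can be attacked by exchange arguments starting from the staircase selection $(1,1), \ldots, (r_1, 1), (r_1, 2), \ldots, (r_1, r_2)$, whose sums are automatically distinct.
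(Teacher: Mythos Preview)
The paper does not prove this lemma at all: it is quoted verbatim from Grynkiewicz--Serra \cite{GS10} and used as a black box, so there is no ``paper's own proof'' to compare against.

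Your reduction to rows is correct and standard: after slicing $A$ and $B$ into horizontal rows, the one-dimensional bound on each fibre gives
\[
|A+B|\;\ge\;\sum_{\gamma}\max_{\alpha_i+\beta_j=\gamma}(a_i+b_j-1),
\]
and the whole content of the lemma is then the purely combinatorial inequality
\[
\sum_{\gamma}\max_{\alpha_i+\beta_j=\gamma}(a_i+b_j-1)\;\ge\;\Bigl(\tfrac{|A|}{r_1}+\tfrac{|B|}{r_2}-1\Bigr)(r_1+r_2-1).
\]
This is exactly where your write-up stops being a proof and becomes a plan. You yourself flag that the naive peel-off-a-row induction does not close (the deficit is proportional to the deviation of the peeled row from its average, and you have no mechanism to absorb it), and the alternative ``assignment'' formulation---find $r_1+r_2-1$ pairs with distinct sums whose total weight meets the target---is stated but not proved. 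The staircase selection you propose as a starting point is precisely the one that can fail badly: if the large rows of $A$ are concentrated away from index $r_1$ and those of $B$ away from index $r_2$, the staircase undercounts by an arbitrarily large amount, and an exchange argument from that starting point has to recover all of this. So as written there is a genuine gap: the hard combinatorial step is identified but not done.

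For what it is worth, Grynkiewicz and Serra do not argue along either of your suggested lines; their proof in \cite{GS10} proceeds via a compression argument that replaces each horizontal row by an arithmetic progression (so that the fibres become intervals whose sumset sizes are exactly $a_i+b_j-1$) and then analyses the resulting ``staircase'' set directly. If you want to complete your approach, the cleanest route is probably to first reduce, by compression along $l$, to the case where every row is an interval starting at $0$; then the left-hand side becomes $\sum_\gamma \max_{i+j=\gamma}(a_i+b_j)-1$ with $\alpha_i=i$, $\beta_j=j$, and the inequality can be attacked by a more careful double induction that always peels the row of minimal size among all four extremal rows.
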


In particular, we note that, since $|B| \geq r_2$ and $r_1 \geq 1$,
$$|A + B| \geq \frac{r_2}{r_1}|A|.$$

\begin{lem} \label{lem:base}
Let $A,B\subset\RR^2$ be finite sets and $l$ be a fixed line. Let $r_1$ be the number of lines parallel to $l$ which intersect $A$. If $|A|\geq |B|$ and $|A+B|<|A|+7|B|/3-5\sqrt{|A|}$, then either $r_1\leq 2$ or $r_1> |A|/4$.
\end{lem}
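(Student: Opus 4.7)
The plan is to argue by contradiction: suppose $3 \le r_1 \le |A|/4$ while $|A+B| < |A|+7|B|/3-5\sqrt{|A|}$, and let $r_2$ denote the number of lines parallel to $l$ meeting $B$. The case $r_2=1$ is immediate, since then $B$ lies on a single line parallel to $l$, and the one-dimensional sumset bound applied on each of the $r_1$ parallel lines through $A$ gives $|A+B|\ge |A|+r_1(|B|-1)\ge |A|+3|B|-3$, which already exceeds $|A|+7|B|/3-5\sqrt{|A|}$ unconditionally.

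For $r_2 \ge 2$, apply the Grynkiewicz--Serra inequality (Lemma~\ref{lem:gs}) to obtain $|A+B|\ge(|A|/r_1+|B|/r_2-1)(r_1+r_2-1)$. Expanding and subtracting the target, the contradiction reduces to showing
$$\Delta:=\frac{|A|(r_2-1)}{r_1}+|B|\!\left(\frac{r_1-1}{r_2}-\frac{4}{3}\right)-r_1-r_2+1+5\sqrt{|A|}\ \ge\ 0.$$
I would split according to the sign of the coefficient of $|B|$. When $(r_1-1)/r_2\ge 4/3$, substituting $|B|\ge r_2$ and then using $|A|/r_1\ge 4$ (which follows from $r_1\le|A|/4$) collapses the bound to $\Delta\ge 5r_2/3-4+5\sqrt{|A|}\ge 0$, which is immediate. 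When $(r_1-1)/r_2<4/3$, substituting $|B|\le|A|$ instead reduces the task to verifying the single inequality
$$|A|\cdot F(r_1,r_2)+5\sqrt{|A|}\ \ge\ r_1+r_2-1,\qquad F(r_1,r_2):=\frac{r_2-1}{r_1}+\frac{r_1-1}{r_2}-\frac{4}{3}.$$

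The main obstacle is this last inequality. Rewriting $F=(r_1-r_2)^2/(r_1r_2)+2/3-1/r_1-1/r_2$ shows $F\ge 0$ for integers $r_1\ge 3$ and $r_2\ge 2$, with equality only at $(r_1,r_2)\in\{(3,2),(3,3)\}$; these two equality cases are handled immediately, since the required bound becomes $5\sqrt{|A|}\ge r_1+r_2-1\in\{4,5\}$. For the remaining pairs, a short calculation shows that $|A|\cdot F(r_1,r_2)-(r_1+r_2-1)$ is nondecreasing in $(r_1-r_2)^2$ at fixed $r_1+r_2$, so the minimum lies either on the diagonal $r_1=r_2=r$ (when $r\le|A|/4$) or at the corner $r_1=|A|/4$. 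On the diagonal one has $F=2(r-3)/(3r)$, and the AM-GM inequality $|A|/r+r\ge 2\sqrt{|A|}$ together with $r\in[3,|A|/4]$ yields $\Delta\ge 5\sqrt{|A|}-5\ge 0$; at the corner, an analogous AM-GM bound applied to $3r_2+|A|(|A|/4-1)/r_2$ gives $\Delta\ge 0$ once $|A|$ is large enough. The delicate step is keeping the AM-GM estimates sharp enough to absorb the $-r_1-r_2+1$ term into the $5\sqrt{|A|}$ slack, which is precisely what fixes the coefficients $7/3$ and $5$ appearing in the statement.
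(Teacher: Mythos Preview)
Your route diverges from the paper's and contains a concrete error in the diagonal step.

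The paper first extracts an a~priori bound on $r_2$ from the hypothesis: since $|A+B|\ge |A|\,r_2/r_1$ (a consequence of Lemma~\ref{lem:gs}) while $|A+B|<10|A|/3$, one gets $r_2\le 10r_1/3$. It then splits into $r_1\le\sqrt{|A|}$ and $r_1>\sqrt{|A|}$. In the first regime $r_1+r_2\le 5\sqrt{|A|}$, so the $-(r_1+r_2-1)$ term in the expanded Grynkiewicz--Serra bound is absorbed by $-5\sqrt{|A|}$ and everything reduces to the elementary check that $1+(r_2-1)/r_1+(r_1-1)/r_2\ge 7/3$ for integers $r_1\ge 3$. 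In the second regime the paper passes to the ``dual'' parameters $r_1'=|A|/r_1$, $r_2'=|B|/r_2$, for which the Grynkiewicz--Serra expression has the identical shape; since now $r_1'\le\sqrt{|A|}$, the same absorption works and one checks the analogous (now continuous) inequality under the hypothesis $r_1'\ge 4$.

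You never bound $r_2$ and instead try to optimise the full two-variable expression directly, which is where the argument breaks. On the diagonal $r_1=r_2=r$ one has
\[
|A|\,F(r,r)-(2r-1)+5\sqrt{|A|}
\;=\;\tfrac{2}{3}|A|\;-\;2\!\left(\tfrac{|A|}{r}+r\right)\;+\;1\;+\;5\sqrt{|A|},
\]
so what is needed is an \emph{upper} bound on $|A|/r+r$, whereas the AM--GM inequality $|A|/r+r\ge 2\sqrt{|A|}$ that you invoke points the wrong way. (The conclusion $\Delta\ge 5\sqrt{|A|}-5$ happens to be true, but it comes from the convexity of $r\mapsto |A|/r+r$ on $[3,|A|/4]$ and evaluation at the endpoint $r=3$, not from AM--GM.) Likewise, your corner analysis ends with ``once $|A|$ is large enough'', yet the lemma carries no size hypothesis beyond the implicit $|A|\ge 12$ forced by $3\le r_1\le|A|/4$; you would have to verify that this suffices. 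The paper sidesteps all of this bookkeeping: bounding $r_2$ first and splitting at $\sqrt{|A|}$ turns each case into a one-line inequality check.
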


\begin{proof}
Notice that if $A$ is at most 1 dimensional, then either $r_1=1$ or $r_1=|A|$, so we may assume that $\dim(A)=2$. Let $r_2$ be the number of lines parallel to $l$ which intersect $B$. 
We consider 2 cases, depending on whether $r_1$ is at most $\sqrt{|A|}$ or not.\\

\noindent
\underline{Case 1: $r_1\leq \sqrt{|A|}$}

\noindent
 We have $10|A|/3\geq |A+B|\geq |A|r_2/r_1$, 
 so $r_2\leq 10r_1/3\leq 4\sqrt{|A|}$. Thus, by Lemma~\ref{lem:gs} and the fact that $|A| \geq |B|$,
\begin{align*}
    |A+B| &\geq \paren{\frac{|A|}{r_1}+\frac{|B|}{r_2}-1}(r_1+r_2-1)\\
    &= |A|+\frac{r_2-1}{r_1}|A|+\paren{1+\frac{r_1-1}{r_2}}|B|-r_1-r_2+1\\
    &\geq |A|+\paren{1+\frac{r_2-1}{r_1}+\frac{r_1-1}{r_2}}|B|-5\sqrt{|A|}.
\end{align*}
If $r_2=1$ and $r_1 \geq 3$, then this last expression is $|A|+r_1|B|-5\sqrt{|A|}\geq |A|+3|B|-5\sqrt{|A|}$. If $r_2=2$ and $r_1 \geq 3$, then it is
$$|A|+\paren{\frac12+\frac{1}{r_1}+\frac{r_1}{2}}|B|-5\sqrt{|A|}\geq |A|+\frac73 |B|-5\sqrt{|A|}.$$
If $r_2\geq 3$ and $r_1 \geq 3$, then it is at least
$$|A|+\paren{3-\frac{1}{r_1}-\frac{1}{r_2}}|B|-5\sqrt{|A|}\geq |A|+\frac73 |B|-5\sqrt{|A|}.$$ 
In each case, we contradict our assumption that $|A+B|<|A|+7|B|/3-5\sqrt{|A|}$, so we must have $r_1 \leq 2$.\\

\noindent
\underline{Case 2: $r_1\geq \sqrt{|A|}$}

\noindent
 Let $r_1'=|A|/r_1$ and  $r_2'=|B|/r_2$, so that $r_1'\leq \sqrt{|A|}$ and
$$|A + B| \geq \paren{\frac{|A|}{r_1'}+\frac{|B|}{r_2'}-1}(r_1'+r_2'-1),$$
which is the same expression as in the previous case, but now $r_1',r_2'$ may not be integers. Nevertheless, we still have $1\leq r_1'\leq |A|$ and $1\leq r_2'\leq |B|$, so that $|A+B|\geq \frac{r_2'}{r_1'}|A|$ and, therefore, $r_2'\leq 4\sqrt{|A|}$ holds similarly. 
Expanding the equation above and using $|A| \geq |B|$, we have
\begin{align*}
    |A+B| &\geq |A|+\paren{1+\frac{r_2'}{r_1'}+\frac{r_1'-1}{r_2'}-\frac{1}{r_1'}}|B|-5\sqrt{|A|}\\
    &\geq |A|+\paren{1+2\sqrt{\frac{r_1'-1}{r_1'}}-\frac{1}{r_1'}}|B|-5\sqrt{|A|}.
\end{align*}
Setting $c=\sqrt{\frac{r_1'-1}{r_1'}}$, we see that if $r_1 \leq |A|/4$ or, equivalently, $r'_1 \geq 4$, then $c\geq \frac{\sqrt3}{2}$ and the expression above is $|A|+(2c+c^2)|B|-5\sqrt{|A|}\geq |A|+7|B|/3-5\sqrt{|A|}$. But this again contradicts our assumption, so we must have $r_1 > |A|/4$.
\end{proof}

For higher dimensions, we will use an induction scheme based on taking a series of compressions. Let us first say what a compression is in this context.

\begin{defn}
Let $H$ be a hyperplane in $\RR^d$ and $v\in\RR^d$ a vector not parallel to $H$. For a finite set $A\subset \RR^d$, the \emph{compression of $A$ onto $H$ with respect to $v$}, denoted by $P(A)=P_{H,v}(A)$, 
is formed by replacing the points on any line $l$ parallel to $v$ which intersects $A$ at $s \geq 1$ points with the points $u+jv$, $j=0,1,\ldots,s-1$, where $u$ is the intersection of $l$ with $H$.
\end{defn}

By preserving the ordering of the points on each line, we may view the compression $P$ as a pointwise map $A\to P(A)$, so we may talk about points of $A$ being fixed by $P$. Note that it is clearly the case that $|P(A)|=|A|$. Moreover, sumsets cannot increase in size after applying this compression operation. That this is the case is our next result. 

\begin{lem} \label{lem:compress}
For finite sets $A,B\subset\RR^d$ and a compression $P$,
$$|P(A)+P(B)|\leq |A+B|.$$
\end{lem}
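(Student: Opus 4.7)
The plan is to decompose everything line-by-line. For $u \in H$, let $A_u = A \cap (u + \RR v)$ and similarly for $B_u$ and for the sumset; these fibers partition $A$, $B$, and $A+B$ (and likewise for $P(A)$, $P(B)$, $P(A)+P(B)$) since lines parallel to $v$ are disjoint, and since both $A+B$ and $P(A)+P(B)$ are preserved under the projection along $v$ onto $H$, we have
\[
  |A+B| \;=\; \sum_{u \in H} |(A+B)_u|, \qquad |P(A)+P(B)| \;=\; \sum_{u \in H} |(P(A)+P(B))_u|.
\]
So it suffices to prove, for each $u \in H$, the pointwise inequality $|(P(A)+P(B))_u| \leq |(A+B)_u|$.

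Fix $u \in H$. For any $u_1,u_2 \in H$ with $u_1 + u_2 = u$, by definition of compression, $P(A)_{u_1} = \{u_1 + jv : 0 \leq j \leq |A_{u_1}|-1\}$ and similarly for $P(B)_{u_2}$, so
\[
  P(A)_{u_1} + P(B)_{u_2} \;=\; \{u + kv : 0 \leq k \leq |A_{u_1}| + |B_{u_2}| - 2\},
\]
an arithmetic progression along $v$ starting at $u$. Taking the union over all such pairs with both fibers nonempty, these progressions all share the common starting point $u$, so the union is just the longest one:
\[
  |(P(A)+P(B))_u| \;=\; \max_{\substack{u_1+u_2=u \\ A_{u_1}, B_{u_2} \neq \emptyset}} \bigl(|A_{u_1}| + |B_{u_2}|\bigr) - 1.
\]

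On the other hand, for the original sumset, $(A+B)_u \supseteq A_{u_1} + B_{u_2}$ for each decomposition $u_1+u_2=u$, and since $A_{u_1}$ and $B_{u_2}$ lie on parallel one-dimensional affine lines, the classical one-dimensional bound gives $|A_{u_1}+B_{u_2}| \geq |A_{u_1}| + |B_{u_2}| - 1$ whenever both are nonempty. Taking the maximum over all such pairs,
\[
  |(A+B)_u| \;\geq\; \max_{\substack{u_1+u_2=u \\ A_{u_1}, B_{u_2} \neq \emptyset}} \bigl(|A_{u_1}| + |B_{u_2}|\bigr) - 1 \;=\; |(P(A)+P(B))_u|.
\]
Summing over $u \in H$ completes the proof.

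There is no real obstacle here: the argument is a routine line-by-line compression, and the only input beyond bookkeeping is the one-dimensional inequality $|X+Y|\geq |X|+|Y|-1$. The one subtle point worth highlighting is that, after compression, the fiber sumsets $P(A)_{u_1}+P(B)_{u_2}$ all begin at the same point $u$, which is why their union collapses to a single progression and the inequality becomes a clean comparison of the same maximum.
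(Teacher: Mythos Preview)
Your argument is correct and is essentially the same fiberwise comparison as in the paper: reduce to the inequality $|(P(A)+P(B))_u|\le |(A+B)_u|$ on each line parallel to $v$, compute the left side as $\max(|A_{u_1}|+|B_{u_2}|-1)$ using that compressed fibers are arithmetic progressions with a common starting point, and bound the right side below by the one-dimensional estimate $|A_{u_1}+B_{u_2}|\ge |A_{u_1}|+|B_{u_2}|-1$. The only cosmetic point is that writing ``$u_1+u_2=u$ with $u_1,u_2,u\in H$'' tacitly assumes $H$ passes through the origin; the paper makes this reduction explicit at the outset.
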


\begin{proof}
Without loss of generality, we may assume that $H$ passes through the origin. Let $p:\RR^d\to H$ be the projection onto $H$ along $v$. For $u\in p(A)$, let $l_u$ be the line through $u$ parallel to $v$ and define $X_u=X\cap l_u$ for any set $X\subset\RR^d$. Note that $p(P(A))=p(A)$ and so $p(P(A)+P(B))=p(A+B)$. It therefore suffices to show that  $|(P(A)+P(B))_u|\leq |(A+B)_u|$ for each $u\in p(A+B)=p(A)+p(B)$. Since $P(A)_x$ is a set of the form $\setcond{x+jv}{j=0,\ldots,s-1}$, we have 
\begin{align*}
    |(P(A)+P(B))_u| &=\max\setcond{|P(A)_x+P(B)_y|}{x\in p(A),y\in p(B),x+y=u}\\
    &= \max\setcond{|P(A)_x|+|P(B)_y|-1}{x\in p(A),y\in p(B),x+y=u}\\
    &= \max\setcond{|A_x|+|B_y|-1}{x\in p(A),y\in p(B),x+y=u}\\
    &\leq |(A+B)_u|. \qedhere
\end{align*}
\end{proof}

Our main compression lemma, which draws on ideas in the work of Stanchescu~\cite{S08, S10}, is now as follows.

\begin{lem} \label{lem:reduce}
 Let $A,B\subset \RR^d$ be finite sets such that $\dim(A)=d\geq 3$ and $l$ be a fixed line. Suppose that there are exactly $s<|A|$ lines parallel to $l$ which intersect $A$. Then there are sets $A',B'\subset \RR^d$ satisfying the following properties:
\begin{enumerate}
    \item $|A'|=|A|$, $|B'|=|B|$;
    \item $|A'+B'|\leq |A+B|$;
    \item there are exactly $s$ lines $l_1',\ldots,l_s'$ parallel to $l$ intersecting $A'$;
    \item $\dim(A')=d$;
    \item $l_1',\ldots,l_{s-1}'$ lie on a hyperplane;
    \item $l_s'$ intersects $A'$ at a single point.
\end{enumerate}
\end{lem}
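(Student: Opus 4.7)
My plan is to construct $A', B'$ from $A, B$ by a finite sequence of compressions. Each compression preserves cardinalities (giving property (1)) and by Lemma~\ref{lem:compress} does not increase $|A+B|$ (giving property (2)), so all effort goes into securing properties (3)--(6). Let $v$ be the direction of $l$, fix a hyperplane $H_0$ transverse to $v$, and let $\pi:\RR^d\to H_0\cong\RR^{d-1}$ be projection along $v$. Each $l_i$ projects to a point $q_i\in H_0$ with multiplicity $m_i=|A\cap l_i|$; since $\dim A=d$, the $q_i$ affinely span $H_0$, so $s\geq d$.

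The construction I envision proceeds in two stages. First, I apply the compression $P_{H_0,v}$, which is along $v$ itself and so preserves $s$, each $m_i$, and each $q_i$, merely resetting each line to $\{q_i,q_i+v,\ldots,q_i+(m_i-1)v\}$. Second, I apply compressions $P_{H_1,w}$ with $w$ transverse to $v$ and $H_1$ a hyperplane containing $v$; such a compression acts slice-by-slice, inducing at each level a $(d-1)$-dimensional compression of the active projections along $\bar w=w$ onto $\bar H=H_1\cap H_0$. By repeating (and re-normalizing as needed), I aim to drive $s-1$ of the projections into a common $(d-2)$-dimensional subspace of $H_0$ while shrinking the remaining line to a single point.

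To choose the compressions I would run an extremal argument. Among all configurations reachable from $(A,B)$ by sequences of compressions that preserve properties (1)--(4), pick one lex-minimizing the pair $(\min_i m_i,\,-M)$, where $M$ is the maximum number of projections $q_i$ lying on a single $(d-2)$-dimensional affine subspace of $H_0$. Achieving $(\min_i m_i,-M)=(1,-(s-1))$ would yield (6) and (5) respectively, with the singleton line $l_s'$ chosen to be the unique projection off the hyperplane. To show extremality forces this value, I plan to argue that whenever the functional exceeds it, one can exhibit a further compression strictly decreasing it, the hypothesis $s<|A|$ ensuring at least one $m_i\geq 2$ from which to redistribute mass.

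The main obstacle is that an arbitrary transverse compression in the second stage can cause two projections to collide, reducing $s$ and violating (3) (and possibly (4)). Arranging for the decreasing-compression step to avoid such collisions and dimension collapse is the technical heart of the argument. I expect to pick the direction $\bar w$ as the difference of two carefully chosen points of $A$, in the style of Stanchescu's earlier work~\cite{S08,S10}, so that the induced slice-wise compression makes projections coalesce in a controlled manner while preserving both the number of underlying lines and the full dimension of $A'$.
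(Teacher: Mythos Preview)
Your outline follows the same compression-based strategy as the paper, but as written it is only a plan: the step you yourself flag as ``the technical heart of the argument'' --- showing that whenever the functional $(\min_i m_i,-M)$ is not yet $(1,-(s-1))$ one can find a compression that strictly decreases it while preserving (3) and (4) --- is not carried out, and it is not clear it can be. A transverse compression $P_{H_1,w}$ with $w\in H_0$ acts differently on each $v$-level (since the set of active projections $\{q_i:m_i>j\}$ depends on $j$), so after such a compression the result need not be a union of $s$ segments in direction $v$; re-applying $P_{H_0,v}$ restores the segment form but reshuffles the multiplicities $m_i$ in a way that does not obviously decrease your functional. Moreover, even if you reach $(\min_i m_i,-M)=(1,-(s-1))$, you still need the singleton line to be precisely the one \emph{off} the $(d-2)$-flat, which your functional does not enforce.

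The paper closes these gaps by inserting a decisive preliminary step you omit: before attempting any flattening, it compresses along \emph{all} coordinate directions $e_1,\dots,e_d$, producing a downward-closed subset of $\NN_0^d$ (if $(x_i)\in A$ and $0\le y_i\le x_i$ then $(y_i)\in A$). This structure is exactly what makes the subsequent compressions controllable: choosing $w\in A\cap H_{d-1}\cap H_d$ with maximal coordinate sum and compressing along $e_{d-1}-w$ then strictly reduces the number of $H_{d-1}$-translates meeting $A$ while provably preserving $s$ and keeping $\{0,e_1,\dots,e_d\}\subseteq A$, until only $H_{d-1}$ and the single line $e_{d-1}+\RR e_d$ remain; one final compression along $e_{d-1}-re_d$ shrinks that line to the point $e_{d-1}$. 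Note the order: the paper secures (5) first and (6) last, whereas your functional tries to optimise both at once. If you want to make your extremal argument go through, you will almost certainly need the downward-closed normal form (or something equivalent) as an intermediate step.
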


\begin{proof}
The sets $A',B'$ will be obtained by taking a series of compressions, so 1 and 2 will automatically be satisfied by Lemma~\ref{lem:compress}. Let $e_1,\ldots,e_d$ be the standard basis of $\RR^d$. By applying an affine transformation if necessary, we may assume that $l$ is the line $\RR e_d$ and that $A$ contains the set $S=\set{0,e_1,\ldots,e_d}$ (this is possible since at least one line parallel to $l$ intersects $A$ in at least 2 points). 
For each $i$, let $H_i$ be the hyperplane through 0 perpendicular to $e_i$. Let $P_i=P_{H_i,e_i}$ be the compression onto $H_i$ with respect to $e_i$. Let $A_1=P_d(A)$, 
noting that this set satisfies 3 and $s=|A_1\cap H_d|$. Furthermore, for any compression $P_i$, $i<d$, $|P_i(A_1)\cap H_d|=s$, so $P_i(A_1)$ also satisfies 3. Now set $A_2=P_1(P_2(\cdots P_{d-1}(A_1)\cdots ))$. 
Then $A_2 \subset \NN_0^d$  again satisfies 3 and, since $S\subseteq A_2$, $\dim(A_2)=d$ and it also satisfies 4. Moreover, $A_2$ has the property that if $(x_1,\ldots,x_d)\in A_2$, then, for any $y_1,\ldots,y_d\in \NN_0$ with $y_i\leq x_i$ for all $i$, $(y_1,\ldots,y_d)\in A_2$.

We now show that a finite number of further compressions will give us a set additionally satisfying 5 and 6. Suppose $A_2$ can be covered by $n$ hyperplanes parallel to $H_{d-1}$, i.e., the $(d-1)$th coordinate of all the points of $A_2$ is the set $\set{0,1,\ldots,n-1}$. Let $w=(w_1,\ldots,w_{d-2},0,0)\in A_2$ be such that $w_1+\cdots +w_{d-2}$ is maximal. Then, whenever $tw+u\in A_2\cap H_{d-1}\cap H_d$ for some $u\in \NN_0^d$ and $t\geq 1$, we must have $u=0$ and $t=1$. Let $P$ be the compression onto $H_{d-1}$ with respect to $f=e_{d-1}-w$. Set $A_3=P(A_2)$. Since $f$ is parallel to $H_d$, $|A_3\cap H_d|=|A_2\cap H_d|=s$. The number of lines through $A_3$ parallel to $l$ is $|A_3\cap H_d|=s$, so 3 is still satisfied. Moreover, since $w\in A_2$, $e_{d-1}$ is fixed by $P$, so $S\subseteq A_3$ and 4 is still satisfied. We now consider two cases:\\

\noindent
\underline{Case 1: $n=2$}

\noindent
We claim that $A_3$ is covered by $H_{d-1}$ and the single line $e_{d-1}+\RR e_d$, so that 5 is satisfied with $l_s'=e_{d-1}+\RR e_d$. Indeed, by the maximality of $\norm{w}_1$, the points of $A_2$ on any vertical line $u+\RR e_d$ with $u\in H_d\setminus\set{e_{d-1}}$ are mapped by $P$ into a vertical line contained in $H_{d-1}$. To see this, suppose $e_{d-1}+re_d+v\in A_2$ with $v\in H_{d-1}\cap H_d$ and $r\in \NN_0$. Then $e_{d-1}+re_d+v$ is fixed 
by $P$ iff $v+re_d+w\in A_2$. If $v\neq 0$, then $v+w\not\in A_2$ by the maximality of $w$, so $v+re_d+w\not\in A_2$ and $e_{d-1}+re_d+v$ is not fixed by the compression, being moved instead to $v+re_d+w$. \\

\noindent
\underline{Case 2: $n>2$}

\noindent
Suppose $(n-1)e_{d-1}+v\in A_2$ with $v\in H_{d-1}$. Then, since $(n-1)w+v\not\in A_2$ as in Case 1, $(n-1)e_{d-1}+v$ is not fixed by the compression. Thus, $A_3$ is contained in fewer than $n$ hyperplanes parallel to $H_{d-1}$. By repeatedly applying compressions of this type, we will eventually reach the previous case. Abusing notation very slightly, we shall still call the set obtained after these repeated compressions $A_3$.\\

Thus, $A_3$ is covered by $H_{d-1}$ and the line $e_{d-1}+\RR e_d$. Suppose now that $r>0$ is the largest integer such that $re_d\in A_3$. Let $P'$ be the compression with respect to $g=e_{d-1}-re_d$ and set $A_4=P'(A_3)$. Then all points of $A_3$ in $H_{d-1}$ and $e_{d-1}$ are fixed by $P'$, but $e_{d-1}+te_d$ is mapped to $(r+t)e_d$ for each $t>0$. Thus, $A_4\cap (e_{d-1}+H_{d-1})=\set{e_{d-1}}$, so that $A_4$ 
satisfies 3-6. We may therefore set $A' = A_4$. Finally, to obtain $B'$, we simply apply the same series of compressions to $B$ that we applied to $A$.
\end{proof}

We are now in a position to prove the main result of this section, the promised asymmetric version of  Theorem~\ref{thm:Stan}.

\begin{thm} \label{thm:asym}
Let $d\geq 2$, $A,B\subset \RR^d$ be finite sets and $l$ be a line. Let $r$ be the number of lines parallel to $l$ which intersect $A$. Suppose that $A$ is $d$-dimensional, $|A|\geq |B|$ and $|A+B|<|A|+(d+1/3)|B|-2^{d+1}\sqrt{|A|}-E_d$, where $E_d=(d+2)^{2^d-2}$. Then $r=d$ or $r>|A|/4$.
\end{thm}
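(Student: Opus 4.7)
The plan is to induct on $d$. The base case $d = 2$ follows directly from Lemma~\ref{lem:base}: with $E_2 = 16$ and $2^{d+1} = 8$, our hypothesis implies that of the base lemma for $|A|$ sufficiently large, and a $2$-dimensional set always has $r \geq 2$, so the alternative $r \leq 2$ of Lemma~\ref{lem:base} becomes $r = d$.

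For the inductive step, the projection of $A$ along $l$ is a $(d-1)$-dimensional subset of $\RR^{d-1}$ of size exactly $r$, so $r \geq d$ always; it thus suffices to rule out $d < r \leq |A|/4$. Since this forces $r < |A|$, I would apply Lemma~\ref{lem:reduce} to obtain $A', B'$ with $|A'+B'| \leq |A+B|$ and $A' = A'' \cup \{p\}$, where $A''$ is a $(d-1)$-dimensional subset of a hyperplane $H$ (parallel to $l$ and containing $r-1$ of the relevant lines) and $p \notin H$. Next, decompose $B'$ by slicing into hyperplanes parallel to $H$: write $B' = B_0 \sqcup \cdots \sqcup B_{n-1}$ with $B_j \subset H + jv$ for a direction $v$ with $p \in H + v$. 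Then $A' + B'$ partitions into $n+1$ slices in parallel hyperplanes, the slice in $H + j'v$ being $(A'' + B_{j'}) \cup (p + B_{j'-1})$.

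Summing lower bounds on the slice sizes---Ruzsa's theorem yields $|A'' + B_j| \geq |A''| + (d-1)|B_j| - d(d-1)/2$ for each non-empty $B_j$, and the rightmost slice contributes an extra $|B_{n-1}|$---gives $|A' + B'| \geq (n-1)|A| + (d-1)|B|$ up to lower-order terms. For $n \geq 3$ this already contradicts our hypothesis, leaving $n \in \{1, 2\}$. Both cases should be handled by applying the induction hypothesis to $A''$ (viewed inside $H \cong \RR^{d-1}$) paired with a suitable auxiliary set. For $n = 1$, all of $B'$ lies in a single hyperplane parallel to $H$, the two contributions to $A' + B'$ land in disjoint hyperplanes, and $|A' + B'| = |A'' + B'| + |B'|$; after translating $B'$ into $H$, the subtracted $|B'|$ converts the coefficient $d + 1/3$ of our hypothesis to the $d - 2/3 = (d-1) + 1/3$ needed for the induction hypothesis. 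For $n = 2$, I would use Ruzsa on the slice $A'' + B_1$ to peel off its contribution from $|A+B|$, obtaining an upper bound on $|A'' + B_0|$ sharp enough to invoke the inductive hypothesis on $A''$ and $B_0$. In all cases the induction returns $r - 1 = d - 1$ or $r - 1 > |A''|/4$, contradicting $d < r \leq |A|/4$.

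The hardest part will be the $n = 2$ case: Ruzsa alone gives only $|A' + B'| \geq 2|A| + (d-1)|B|$ up to lower-order terms, which is insufficient when $|B|$ is close to $|A|$. Overcoming this requires the two-step maneuver of using Ruzsa on one slice to control the other, then appealing to induction; the constant bloat at each step is absorbed by the super-exponential growth of $E_d = (d+2)^{2^d - 2}$ relative to $E_{d-1}$.
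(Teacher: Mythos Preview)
Your plan matches the paper's proof in structure: induct on $d$, invoke Lemma~\ref{lem:reduce} to reduce to $A' = A'' \cup \{p\}$ with $A''$ in a hyperplane, then slice $B'$ by parallel hyperplanes. The difference lies in how you bound the slices when there are $s \geq 2$ of them. You propose Ruzsa on every slice for $n \geq 3$ and a Ruzsa-plus-induction hybrid for $n = 2$; the paper instead applies the induction hypothesis to \emph{every} slice pair $(A'', B_i)$ uniformly. Since $d-1 < r-1 \leq |A''|/4$ (which follows from $d < r \leq |A|/4$), the contrapositive of the $(d-1)$-dimensional statement gives $|A'' + B_i| \geq |A''| + (d-2/3)|B_i| - 2^d\sqrt{|A''|} - E_{d-1}$ for each $i$, and summing over $i$ yields
\[
|A+B| \;\geq\; s|A''| + (d-2/3)|B| - s\bigl(2^d\sqrt{|A|} + E_{d-1}\bigr),
\]
which, using $s \geq 2$ and $|A| \geq |B|$, rearranges to the required contradiction in one stroke. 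This avoids your $n=2$ versus $n\geq 3$ case split and the ``peel off one slice with Ruzsa, then induct on the other'' maneuver; it also makes the bookkeeping with $E_d$ cleaner, since the error accumulates as $sE_{d-1}$ rather than through a mixed estimate.

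Two small gaps to patch in your version. First, in the $n=1$ case you need $|A''| \geq |B'|$ to invoke the induction hypothesis, but $|A''| = |A|-1$ while $|B'| = |B|$ may equal $|A|$; the paper handles this by deleting one point of $B$ before inducting. Second, your slice indexing $B_j \subset H + jv$ presumes the $B'$-slices sit at integer multiples of $v$, which Lemma~\ref{lem:reduce} does not guarantee; the inequality $|A'+B'| \geq \sum_j |A''+B_j| + |B_{n-1}|$ still holds (indeed, often improves) for arbitrary offsets, but this should be stated.
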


\begin{proof}
Notice that since $\dim(A)=d$, we must have $r\geq d$. We shall induct on $d$. The case $d=2$ was dealt with in Lemma~\ref{lem:base}. We may therefore assume that $d\geq 3$. $E_d$ is chosen to satisfy the following inequalities:
\begin{enumerate}
    \item $E_d\geq 2(E_{d-1}+1)$,
    \item $E_d\geq (d+2)(2^d+E_{d-1}+1)^2$.
\end{enumerate}
If $|A|\leq (2^d+E_{d-1}+1)^2$, then $|A|+(d+1/3)|B|\leq (d+2)|A|\leq E_d$, so it is not possible that $|A+B|<|A|+(d+1/3)|B|-2^{d+1}\sqrt{|A|}-E_d$. We may therefore assume that $|A| > (2^d+E_{d-1}+1)^2$ and, thus, that  $|A|-2^d\sqrt{|A|}-E_{d-1}-1\geq 0$.

Suppose that $d<r\leq |A|/4$. By Lemma~\ref{lem:reduce}, replacing $A$ with $A'$, we can assume that $A=A_1\cup \{e_d\}$, where $A_1$ lies on the hyperplane $H$ defined by $x_d=0$. Let $H_1,\ldots,H_s$ be the hyperplanes parallel to $H$ that intersect $B$ and let $B_i=B\cap H_i$.

If $s=1$, then $|A+B|=|A_1+B|+|B|$. Moreover, $A_1$ is $(d-1)$-dimensional and is covered by $r-1\leq |A_1|/4$ lines parallel to $l$. Thus, if $|B|\leq |A_1|$, our induction hypothesis implies that $|A_1+B|\geq |A_1|+(d-1+1/3)|B|-2^d\sqrt{|A_1|}-E_{d-1}$. If instead $|B|>|A_1|$, then $|B|=|A_1|+1$, so, letting $B'$ be $B$ with an element removed, our induction hypothesis implies that  $|A_1+B|\geq |A_1+B'|\geq |A_1|+(d-1+1/3)(|B|-1)-2^d\sqrt{|A_1|}-E_{d-1}$. In either case, we have
\begin{align*}
    |A+B| &\geq |A_1|+(d+1/3)(|B|-1)-2^d\sqrt{|A_1|}-E_{d-1}\\
    &\geq |A|+(d+1/3)|B|-2^{d+1}\sqrt{|A|}-E_d.
\end{align*}

If $s\geq 2$, then $|A+B|\geq |A_1+B|= |A_1+B_1|+\cdots+|A_1+B_s|$. By our induction hypothesis, $|A_1+B_i|\geq |A_1|+(d-1+1/3)|B_i|-2^d\sqrt{|A_1|}-E_{d-1}$ for each $i$ and so 
\begin{align*}
    |A+B| &\geq s|A_1|+(d-1+1/3)|B|-2^d s\sqrt{|A_1|}-s E_{d-1}\\
    &\geq 2|A|+(s-2)|A|-s+(d-1+1/3)|B|-2^{d+1}\sqrt{|A|}-2^d(s-2)\sqrt{|A|}-sE_{d-1}\\
    &\geq |A|+(d+1/3)|B|-2^{d+1}\sqrt{|A|}-2(E_{d-1}+1)+(s-2)(|A|-2^d\sqrt{|A|}-E_{d-1}-1)\\
    &\geq |A|+(d+1/3)|B|-2^{d+1}\sqrt{|A|}-E_d. \qedhere
\end{align*}
\end{proof}

\section{Special cases of Theorem~\ref{thm:main}}

In this section, we show that the conclusion of 
Theorem~\ref{thm:main} holds if we make some additional assumptions about the structure of $A$. We begin with a simple example of such a result.

\begin{lem} \label{lem:dlines}
Let $A\subset \RR^d$ be a finite set with $\dim(A) = d$ that can be covered by $d$ parallel lines. Then
$$|A-A|\geq \paren{2d-2+\frac{2}{d}}|A|-(d^2-d+1).$$
\end{lem}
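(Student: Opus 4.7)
The plan is to exploit the rigid structure forced by having $d$ parallel lines in general-ish position. After an affine transformation, assume the $d$ parallel lines are parallel to $e_d$, so that $A = \bigsqcup_{i=1}^{d} A_i$ where $A_i \subset p_i + \RR e_d$ for distinct points $p_i$. Let $a_i = |A_i|$, so $|A| = \sum_i a_i$. Let $\pi : \RR^d \to \RR^{d-1}$ be the projection onto the first $d-1$ coordinates, and write $\bar p_i = \pi(p_i)$. The crucial observation is that because $\dim(A)=d$, the points $\bar p_1,\ldots,\bar p_d$ must affinely span $\RR^{d-1}$; otherwise $A$ would lie in a hyperplane. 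Hence $\{\bar p_1,\ldots,\bar p_d\}$ is affinely independent, i.e.\ forms a $(d-1)$-simplex.

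Next I would show that the $d^2 - d$ nonzero differences $\bar p_i - \bar p_j$ (over ordered $i \neq j$) are pairwise distinct. Indeed, an equality $\bar p_i - \bar p_j = \bar p_k - \bar p_l$ rearranges to $\bar p_i + \bar p_l = \bar p_j + \bar p_k$, which by affine independence forces $\{i,l\} = \{j,k\}$, and comparing coordinates then forces $(i,j)=(k,l)$. Together with the zero difference, this gives exactly $d^2 - d + 1$ distinct values in $\pi(A-A)$.

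Now partition $A-A$ according to its image under $\pi$. Over the zero fiber we see $\bigcup_i (A_i - A_i)$, which, as a union of difference sets lying on the single line $\RR e_d$, has size at least $2\max_i a_i - 1$ (from the largest piece alone). Over the fiber labelled by $\bar p_i - \bar p_j$ (for $i \neq j$) we see exactly $A_i - A_j$ on the line $(p_i-p_j) + \RR e_d$, which has size at least $a_i + a_j - 1$. Summing gives
\begin{align*}
|A-A| &\geq (2\max_i a_i - 1) + \sum_{i \neq j}(a_i + a_j - 1) \\
&= 2\max_i a_i - 1 + 2(d-1)|A| - d(d-1).
\end{align*}

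Finally, by pigeonhole $\max_i a_i \geq |A|/d$, so
$$|A-A| \geq \frac{2}{d}|A| + 2(d-1)|A| - d(d-1) - 1 = \paren{2d - 2 + \frac{2}{d}}|A| - (d^2 - d + 1),$$
which is the desired bound. There is no real obstacle here; the only thing to be careful about is justifying the simplex structure of $\{\bar p_i\}$ from the dimension hypothesis and distinctness of the simplex differences, after which the counting is essentially one line.
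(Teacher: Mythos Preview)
Your proof is correct and follows essentially the same approach as the paper: both observe that the $d$ lines project to an affinely independent set (equivalently, are in general position), deduce that the sets $A_i-A_j$ for $i\neq j$ together with $A_1-A_1$ are pairwise disjoint, and then bound $|A-A|$ by $2\max_i a_i - 1 + \sum_{i\neq j}(a_i+a_j-1)$ with $\max_i a_i \geq |A|/d$. Your version is simply more explicit in justifying the disjointness via the projection $\pi$ and the distinctness of the simplex differences.
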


\begin{proof}
Suppose $A=A_1\cup \cdots\cup A_d$ where each $A_i$ lies on a line parallel to some fixed line $l$. Let $a_i=|A_i|$ and assume, without loss of generality, that $a_1\geq a_2\geq\cdots\geq a_d$. Since $A$ is $d$-dimensional, the $d$ lines covering $A$ are in general position, i.e., no $k$ of them lie on a $(k-1)$-dimensional affine subspace for each $1 \leq k \leq d$. Thus, for $i\neq j$, the sets $A_i-A_j$ are pairwise disjoint and also disjoint from $A_1-A_1$. Hence, we have
\begin{align*}
    |A-A| &\geq |A_1-A_1|+\sum_{i\neq j} |A_i-A_j|\\
    &\geq 2a_1-1+\sum_{i\neq j} (a_i+a_j-1)\\
    &\geq 2a_1-1+2(d-1)\sum_i a_i-d(d-1)\\
    &\geq \paren{2d-2+\frac{2}{d}}|A|-(d^2-d+1). \qedhere
\end{align*}
\end{proof}

We will use a common framework for the next two lemmas, with the following definition playing a key role.

\begin{defn}
Let $A\subset\RR^d$ be a finite set with $\dim(A) = d$ and $l$ be a fixed line. A hyperplane $H$ is said to be a \emph{supporting hyperplane} of $A$ if all points of $A$ either lie on $H$ or on one side of $H$. A supporting hyperplane $H$ of $A$ is said to be a \emph{major hyperplane of $A$ (with respect to $l$)} if $H$ is parallel to $l$ and $|H\cap A|$ is maximal.
\end{defn}

Suppose now that $A\subset \RR^d$ is $d$-dimensional and $l$ is a fixed line. 
Let $H$ be a major hyperplane with respect to $l$ and $H_1=H,H_2,\ldots,H_r$ be the hyperplanes parallel to $H$ that intersect $A$, arranged in the natural order. Let $A_i=A\cap H_i$ for $i=1,\ldots,r$. Since $|A_1|$ is maximal, $|A_1|\geq |A_r|$. Let $\pi$ be the projection along $l$ onto a hyperplane perpendicular to $l$. Then $\dim(\pi(A))=d-1$ and 
$\pi(H)$ is a maximal face of the convex hull of $\pi(A)$ (since $|H\cap A|$ is maximal), so $\dim(\pi(A_1))=d-2$, which implies that there are at least $d-1$ lines parallel to $l$ intersecting $A_1$. If any such line intersects $A_1$ in at least 2 points, then $\dim(A_1)=d-1$. Assuming this setup, the next lemma explores the situation where $A$ is covered by two parallel hyperplanes.

\begin{lem} \label{lem:2planes}
Suppose that $r=2$, $\dim(A_1)=d-1$ and there are $s$ lines parallel to $l$ intersecting $A_1$.
\begin{enumerate}
    \item If $s=d-1$, then
    \begin{align*}
        |A-A| &\geq \paren{2d-2}|A|+\frac{2}{d-1}|A_1|-(2d^2-4d+3)\\
        &\geq \paren{2d-2+\frac{1}{d-1}}|A|-(2d^2-4d+3).
    \end{align*}
    \item If $d\leq s\leq |A_1|/4$ and
    $$|A_1-A_1|\geq \paren{2d-4+\frac{1}{d-2}}|A_1|-(2d^2-8d+9),$$
    then, given 
    $0<\epsilon<\min(\frac23, \frac{1}{d-2})-\frac{1}{d-1}$, there is some $n_0$ such that for $|A|\geq n_0$,
    $$|A-A|\geq \paren{2d-2+\frac{1}{d-1}+\epsilon}|A|.$$
\end{enumerate}
\end{lem}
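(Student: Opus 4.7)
The plan is to decompose $A-A$ according to which of the parallel slices $A_1, A_2$ the minuend and subtrahend come from. Since $A_1\subset H_1$ and $A_2\subset H_2=v+H_1$ with $v$ not parallel to $H_1$, the sets $A_1-A_2$ and $A_2-A_1$ lie in two distinct affine hyperplanes $\pm v + H_0$ (where $H_0$ is the hyperplane parallel to $H_1$ through the origin), both disjoint from $H_0$ itself, which contains $A_1-A_1$ and $A_2-A_2$. Combined with $|A_1-A_2|=|A_2-A_1|$, this yields the starting inequality
$$|A-A|\geq |A_1-A_1|+2|A_1-A_2|.$$

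For part 1 ($s=d-1$), I would apply Lemma~\ref{lem:dlines} to $A_1$, which is a $(d-1)$-dimensional subset of $H_1$ covered by $d-1$ parallel lines, giving
$$|A_1-A_1|\geq \paren{2d-4+\tfrac{2}{d-1}}|A_1|-(d^2-3d+3).$$
For $|A_1-A_2|$, I would use Ruzsa's inequality from the introduction inside the $(d-1)$-dimensional affine subspace containing it, noting that $\dim(A_1-A_2)=d-1$ since $A_1-A_2$ contains a translate of $A_1$, and $|A_1|\geq|A_2|$ by the majorality of $H_1$; this yields $|A_1-A_2|\geq |A_1|+(d-1)|A_2|-d(d-1)/2$. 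Summing, the $|A_1|$-coefficient becomes $2d-2+\tfrac{2}{d-1}$, the $|A_2|$-coefficient becomes $2d-2$, and the constants combine exactly to $2d^2-4d+3$, producing the first displayed inequality of part 1. The second then follows from $|A_1|\geq|A|/2$.

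For part 2 ($d\leq s\leq |A_1|/4$), the hypothesised bound on $|A_1-A_1|$ handles one term. To improve on Ruzsa for $|A_1-A_2|$, I would apply Theorem~\ref{thm:asym} in dimension $d-1$ with $A_1$ in the role of $A$ and $-A_2$ in the role of $B$; since $s\geq d$ rules out $s=d-1$ and $s\leq |A_1|/4$ rules out the other alternative, the contrapositive yields
$$|A_1-A_2|\geq |A_1|+(d-\tfrac{2}{3})|A_2|-2^d\sqrt{|A_1|}-E_{d-1}.$$
Summing, the combined $|A_1|$-coefficient is $2d-2+\tfrac{1}{d-2}$ and the $|A_2|$-coefficient is $2d-2+\tfrac{2}{3}$. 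Writing $\alpha=|A_1|/|A|\in[\tfrac12,1]$, the leading term becomes $\bigl(2d-2+\tfrac{\alpha}{d-2}+\tfrac{2}{3}(1-\alpha)\bigr)|A|-O(\sqrt{|A|})$; the linear function in $\alpha$ is minimised at $\alpha=1$ for $d\geq 4$ (value $2d-2+\tfrac{1}{d-2}$) and at $\alpha=\tfrac12$ for $d=3$ (value $\tfrac{29}{6}$). In both cases the minimum exceeds $2d-2+\tfrac{1}{d-1}$ by at least $\min(\tfrac{2}{3},\tfrac{1}{d-2})-\tfrac{1}{d-1}$, so for $|A|$ sufficiently large the $O(\sqrt{|A|})$ and $O(1)$ errors are absorbed into the slack beyond $\epsilon$. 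The main obstacle is purely bookkeeping: ensuring that Theorem~\ref{thm:asym} is applied in dimension $d-1$ with the correct choice of which set plays each role, that the excluded line count $d-1$ is forbidden by $s\geq d$, and that the $\alpha$-optimisation returns exactly the slack $\min(\tfrac{2}{3},\tfrac{1}{d-2})-\tfrac{1}{d-1}$ that appears in the hypothesis on $\epsilon$.
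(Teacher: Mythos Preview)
Your proposal is correct and follows essentially the same route as the paper: the same decomposition $|A-A|\geq |A_1-A_1|+2|A_1-A_2|$, Lemma~\ref{lem:dlines} and Ruzsa's inequality for part~1, and Theorem~\ref{thm:asym} in dimension $d-1$ for part~2. The only difference is cosmetic bookkeeping in part~2---the paper rewrites the sum as $\bigl(2d-2+\tfrac{1}{d-1}+\epsilon\bigr)|A|+\bigl(\tfrac{1}{(d-1)(d-2)}-\epsilon\bigr)|A_1|$ minus error terms and uses $\epsilon<\tfrac{1}{(d-1)(d-2)}$ to absorb the errors into the $|A_1|$ term, whereas you parametrise by $\alpha=|A_1|/|A|$ and optimise; both arrive at the same slack $\min(\tfrac{2}{3},\tfrac{1}{d-2})-\tfrac{1}{d-1}$.
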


\begin{proof}
For  1, note, by Lemma~\ref{lem:dlines}, that
    $$|A_1-A_1|\geq\paren{2d-4+\frac{2}{d-1}}|A_1|-(d^2-3d+3).$$
    By Ruzsa's inequality~\eqref{eqn:ruzsa}, $|A_1-A_2|\geq |A_1|+(d-1)|A_2|-d(d-1)/2$ and so
    \begin{align*}
        |A-A| &\geq |A_1-A_1|+2|A_1-A_2|\\
        &\geq \paren{2d-2+\frac{2}{d-1}}|A_1|+(2d-2)|A_2|-d(d-1)-(d^2-3d+3)\\
        &\geq (2d-2)|A|+\frac{2}{d-1}|A_1|-(2d^2-4d+3)\\
        &\geq \paren{2d-2+\frac{1}{d-1}}|A|-(2d^2-4d+3).
    \end{align*}
    
    For 2, $A_1$ is $(d-1)$-dimensional and cannot be covered by $d-1$ lines, so this case only exists for $d\geq 3$. Since $|A_1|\geq |A_2|$,  Theorem~\ref{thm:asym} implies that
    $$|A_1-A_2|\geq |A_1|+(d-2/3)|A_2|-2^d\sqrt{|A_1|}-E_{d-1}.$$
    But then, since $|A_1|\geq |A|/2$ can be taken sufficiently large, 
    \begin{align*}
        |A-A| &\geq |A_1-A_1|+2|A_1-A_2|\\
        &\geq \paren{2d-4+\frac{1}{d-2}}|A_1|-(2d^2-8d+9)+2|A_1|+2(d-2/3)|A_2|-2^{d+1}\sqrt{|A_1|}-2E_{d-1}\\
        &\geq \paren{2d-2+\frac{1}{d-1}+\epsilon}|A|+\paren{\frac{1}{(d-1)(d-2)}-\epsilon}|A_1|-(2d^2-8d+9)-2^{d+1}\sqrt{|A_1|}-2E_{d-1}\\
        &\geq \paren{2d-2+\frac{1}{d-1}+\epsilon}|A|,
    \end{align*}
    as required.
\end{proof}

We now consider the situation where every line parallel to $l$ meets $A$ in a reasonable number of points.

\begin{lem} \label{lem:lines}
Let $0<\epsilon<1/(4d+1)(d-1)$. Suppose that every line parallel to $l$  intersecting $A$ intersects $A$ in at least $4d$ points. Then there is a constant $C_d$ such that either
\begin{enumerate}
    \item 
    $$|A-A|\geq \paren{2d-2+\frac{1}{d-1}+\epsilon}|A|-C_d$$ 
    or
    \item $r=2$ and
    $$|A-A|\geq \paren{2d-2}|A|+\frac{2}{d-1}|H\cap A|-(2d^2-4d+3).$$
\end{enumerate}
In particular,
$$|A-A|\geq \paren{2d-2+\frac{1}{d-1}}|A|-(2d^2-4d+3)$$
for $|A|$ sufficiently large.
\end{lem}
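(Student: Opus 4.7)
The plan is to argue by induction on $d$, using the classical Freiman--Heppes--Uhrin bound as the base case $d=2$. Assuming Theorem~\ref{thm:main} in dimension $d-1$, I will split into cases on $r$. To set up, note that the preamble to Lemma~\ref{lem:2planes} yields $\dim(A_1)=d-1$ in our setting: $\pi(A_1)$ is $(d-2)$-dimensional, so $A_1$ contains at least $d-1$ lines parallel to $l$, each of length $\geq 4d$, and in particular $a_1:=|A_1|\geq 4d(d-1)$.

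When $r=2$, I will apply Lemma~\ref{lem:2planes}. Letting $s$ be the number of lines parallel to $l$ in $A_1$, we have $s\geq d-1$ and $s\leq|A_1|/(4d)\leq|A_1|/4$. If $s=d-1$, Lemma~\ref{lem:2planes}(1) gives conclusion~(2) of the present lemma. If $s\geq d$, the induction hypothesis applied to $A_1$ supplies the bound $|A_1-A_1|\geq(2d-4+1/(d-2))|A_1|-(2d^2-8d+9)$, whereupon Lemma~\ref{lem:2planes}(2) yields conclusion~(1).

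When $r\geq 3$, I will decompose $A-A$ by its coordinate along the direction normal to $H$. The $2r-1$ heights $\{0\}\cup\{\pm(h_i-h_1):2\leq i\leq r\}$ index pairwise disjoint slabs of $A-A$. The height-$0$ slab is bounded below by $|A_1-A_1|\geq (2d-4+1/(d-2))a_1-(2d^2-8d+9)$ via the induction hypothesis, and each nonzero slab at height $\pm(h_i-h_1)$ by $|A_i-A_1|\geq a_1+(d-1)|A_i|-d(d-1)/2$ using Ruzsa's inequality in the ambient $(d-1)$-dimensional hyperplane (the hypothesis $a_1\geq|A_i|$ holds since $H_1$ is major, and $\dim(A_i-A_1)=d-1$ follows from $\dim(A_1)=d-1$). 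Summing these contributions yields
\[
|A-A|\geq[2r-4+1/(d-2)]a_1+2(d-1)|A|-(r-1)d(d-1)-(2d^2-8d+9).
\]

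The hard part will be absorbing the absolute penalty $(r-1)d(d-1)$, which could be proportional to $|A|$ when $r$ is close to its maximum possible value $|A|/(4d)$. I will handle this by splitting on the size of $r$. When $r\leq|A|/(4d(d-1))$, the inequality $a_1\geq|A|/r$ produces a coefficient on $|A|$ of $2d-4/r+1/((d-2)r)$; a direct algebraic check shows this exceeds $2d-2+1/(d-1)$ by at least $(2d^2-8d+9)/(3(d-1)(d-2))\geq 1/2$ for every $r\geq 3$ and $d\geq 3$, while the penalty is at most $|A|/4$, leaving a surplus of at least $|A|/4$ over the target. When $r>|A|/(4d(d-1))$, the bound $a_1\geq 4d(d-1)$ makes the term $[2r-4+1/(d-2)]a_1$ dwarf the penalty with net contribution $7d(d-1)r-O_d(1)>7|A|/4-O_d(1)$, again leaving a linear-in-$|A|$ surplus over the target. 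In either subcase, conclusion~(1) holds for $|A|$ sufficiently large in terms of $d$. The ``In particular'' statement then follows: conclusion~(2) combined with $|H\cap A|\geq|A|/2$ (by the majority of $H$) gives the desired inequality on the nose, and conclusion~(1) does so as soon as $|A|$ is large enough for the $\epsilon|A|$ slack to swamp $C_d$.
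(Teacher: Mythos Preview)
Your $r=2$ case matches the paper's argument and is fine. The $r\ge 3$ case, however, rests on the assertion ``the hypothesis $a_1\ge|A_i|$ holds since $H_1$ is major'', and this is false. A major hyperplane maximises $|H\cap A|$ only among \emph{supporting} hyperplanes parallel to $l$; the interior slabs $H_2,\dots,H_{r-1}$ are not supporting, so nothing prevents some $|A_i|$ from exceeding $a_1$. (Concretely: take $d=3$, $r=3$, and arrange $\pi(A_2)$ to lie in the interior of $\conv(\pi(A_1)\cup\pi(A_3))$; then no supporting hyperplane parallel to $l$ meets $A_2$, yet $|A_2|$ can be arbitrarily large compared to $|A_1|=|A_3|$.) Once $a_1\ge a_i$ fails, both pillars of your $r\ge 3$ argument collapse: Ruzsa's inequality gives only $|A_i-A_1|\ge a_i+(d-1)a_1-d(d-1)/2$ in that regime, which is strictly weaker by $(d-2)(a_i-a_1)$ than what you use, and the estimate $a_1\ge|A|/r$ in your first subcase is simply unavailable. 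In the example above with $|A_2|\gg|A_1|$, your decomposition into the $2r-1$ heights through $H_1$ yields a lower bound of order $2|A_2|+O(|A_1|)$, which falls short of the target $(4.5+\epsilon)|A|\approx 4.5|A_2|$.

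A secondary issue: for $d=2$ your $r\ge 3$ bound invokes $1/(d-2)$, so Freiman--Heppes--Uhrin alone does not furnish the dichotomy (it gives $3|A|-3$, not $(3+\epsilon)|A|-C_2$ when $r\ge 3$). The paper handles all of this differently: rather than slicing through a single hyperplane, it inducts on $|A|$ by setting $B=A\setminus A_r$ and using $|A-A|\ge|B-B|+2|A_1-A_r|$. This only ever compares the two \emph{extremal} slabs, for which $a_1\ge a_r$ is genuinely guaranteed, and then runs a careful case analysis on whether the induction hypothesis on $B$ returns option~(1) or option~(2) and on whether $|A_1|$ is a positive fraction of $|A|$.
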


\begin{proof}
We shall induct on $d$ and $|A|$. Let $n_0$ be chosen sufficiently large that the following conditions hold:
\begin{enumerate}
    \item Lemma~\ref{lem:2planes} holds with this $n_0$.
    \item Whenever $B\subset \RR^d$ has $\dim(B)=d-1>1$, each line parallel to $l$ intersecting $B$ intersects it in at least $4(d-1)$ points  and $|B|\geq n_0/2$, then
    $$|B-B|\geq\paren{2d-4+\frac{1}{d-2}}|B|-(2d^2-8d+9).$$
    This is possible by induction since $C_{d-1}$ is already determined.
    \item $\epsilon n_0\geq d(d-1)$.
\end{enumerate}
Then $C_d\geq 2d^2-4d+3$ is chosen sufficiently large that the first option in the lemma trivially holds for $|A|\leq n_0$.

The base case $d=2$ and the inductive step will be handled together. If $|A|\leq n_0$, the lemma holds, so we may assume that $|A|>n_0$. Since $\dim(A_1)=d-1$, there are at least $d-1$ lines parallel to $l$ intersecting $A_1$. 
Each such line intersects $A_1$ in at least $4d$ points, so we have $|A_1|\geq 4d(d-1)$.

First suppose $r=2$. If $A_1$ is covered by $s$ lines parallel to $l$, then, as above, $s\geq d-1$. If $s=d-1$, then, by Lemma~\ref{lem:2planes},
$$|A-A|\geq (2d-2)|A|+\frac{2}{d-1}|A_1|-(2d^2-4d+3).$$
If $s>d-1$, then we must have $d>2$, since, for $d=2$, $\dim(A_1)=1$ and  $A_1$ is covered by a single line. 
Since $\dim(A_1)=d-1 > 1$ and $|A_1|\geq |A|/2\geq n_0/2$, condition 2 implies that
$$|A_1-A_1|\geq\paren{2d-4+\frac{1}{d-2}}|A_1|-(2d^2-8d+9).$$
Each line parallel to $l$ passes through at least 4 points of $A_1$, so $s\leq |A_1|/4$. Thus, by Lemma~\ref{lem:2planes} and condition 1,
$$|A-A| \geq \paren{2d-2+\frac{1}{d-1}+\epsilon}|A|.$$

Now suppose $r>2$. Let $B=A\setminus H_r$ and note that $\dim(B)=d$ and $|B|\geq |A|/2$. By our induction hypothesis,
$$|B-B|\geq \paren{2d-2+\frac{1}{d-1}}|B|-C_d.$$
Let $H'$ be a major hyperplane of $B$ with respect to $l$ (which is not necessarily a major hyperplane of $A$!), so that $|B\cap H'|\geq |A_1|$. If $|A_1|\geq 2\epsilon |A|$, then, using Ruzsa's inequality~\eqref{eqn:ruzsa} and condition $3$,
\begin{align*}
    |A-A| &\geq |B-B|+2|A_1-A_r|\\
    &\geq \paren{2d-2+\frac{1}{d-1}}|B|-C_d+2|A_1|+(2d-2)|A_r|-d(d-1)\\
    &\geq \paren{2d-2+\frac{1}{d-1}}|A|+\paren{2-\frac{1}{d-1}}|A_1|-C_d-d(d-1)\\
    &\geq \paren{2d-2+\frac{1}{d-1}+2\epsilon}|A|-C_d-d(d-1)\\
    &\geq \paren{2d-2+\frac{1}{d-1}+\epsilon}|A|-C_d.
\end{align*}
We may therefore assume that $|A_1|<2\epsilon |A|$.

If $B$ cannot be covered by two translates of $H'$, then, by our induction hypothesis,
$$|B-B|\geq \paren{2d-2+\frac{1}{d-1}+\epsilon}|B|-C_d.$$
Thus, again using Ruzsa's inequality~\eqref{eqn:ruzsa},
\begin{align*}
    |A-A| &\geq |B-B|+2|A_1-A_r|\\
    &\geq \paren{2d-2+\frac{1}{d-1}+\epsilon}|B|+2|A_1|+(2d-2)|A_r|-d(d-1)-C_d\\
    &\geq \paren{2d-2+\frac{1}{d-1}+\epsilon}|A|+\paren{2-\frac{1}{d-1}-\epsilon}|A_1|-d(d-1)-C_d\\
    &\geq \paren{2d-2+\frac{1}{d-1}+\epsilon}|A|-C_d,
\end{align*}
since $|A_1|\geq 4d(d-1)$.

We may therefore assume that $B$ is covered by two translates of $H'$, say $H'$ and $H''$. If $A_r\subseteq H'\cup H''$, then $A\subseteq H'\cup H''$, so one of $|A\cap H'|,|A\cap H''|$ is at least $|A|/2$, say $|A\cap H'|\geq |A|/2$. But $H$ is a major hyperplane of $A$, so $|A_1|=|A\cap H|\geq |A\cap H'|\geq |A|/2$, contradicting our assumption that $|A_1|<2\epsilon |A|$. Hence, $A_r\not\subseteq H'\cup H''$. 

If 
$$|B-B|\geq \paren{2d-2+\frac{1}{d-1}+\epsilon}|B|-C_d,$$
then the above argument holds similarly. Thus, by our induction hypothesis, we must have that
$$|B-B|\geq \paren{2d-2}|B|+\frac{2}{d-1}|H'\cap B|-(2d^2-4d+3).$$
Let $B_1=B\cap H',B_2=B\cap H''$, noting that $|B_1|\geq |B_2|$. Fix also a point $x\in A_r$ that does not lie on $H'\cup H''$. If $x$ lies between $H'$ and $H''$, then $x-B_1,B_1-x,B-B$ are pairwise disjoint. If $H'$ lies between $x$ and $H''$, then $x-B_2,B_2-x,B-B$ are pairwise disjoint. 
If $H''$ lies between $x$ and $H'$, then $x-B_1,B_1-x,B-B$ are pairwise disjoint. In any case, there is some $i\in\set{1,2}$ such that $x-B_i,B_i-x,B-B$ are pairwise disjoint. Since $|B_1|\geq |B_2|$,
\begin{align*}
    |A-A| &\geq |B-B|+2|B_2|\\
    &\geq \paren{2d-2}|B|+\frac{2}{d-1}|B_1|-(2d^2-4d+3)+2|B_2|\\
    &\geq \paren{2d-2+\frac{2}{d-1}}|B|-(2d^2-4d+3)\\
    &= \paren{2d-2+\frac{2}{d-1}}(|A|-|A_r|)-(2d^2-4d+3)\\
    &\geq \paren{2d-2+\frac{1}{d-1}+\epsilon}|A|-C_d,
\end{align*}
where the last inequality follows from $|A_r|\leq |A_1|\leq 2\epsilon |A|$ and $\epsilon<1/(4d+1)(d-1)$.
\end{proof}

\section{Proof of Theorem~\ref{thm:main}}

The final ingredient in our proof is the following 
 structure theorem due to Mudgal~\cite[Lemma 3.2]{M19}, saying that sets with small doubling in $\RR^d$ can be almost completely covered by a reasonably small collection of parallel lines.

\begin{lem}[Mudgal~\cite{M19}] \label{lem:mudgal}
For any $c > 0$, there exist constants $0 < \sigma \leq 1/2$ and $C > 0$ such that if $A \subset \RR^d$ is a finite set with $|A| = n$ and $|A + A| \leq cn$, then there exist parallel lines $l_1, l_2, \ldots, l_r$ with
$$|A \cap l_1| \geq \cdots \geq |A \cap l_r| \geq |A \cap l_1|^{1/2} \geq C^{-1}n^\sigma$$
and
$$|A \setminus (l_1 \cup l_2 \cup \cdots \cup l_r)| < Ccn^{1-\sigma}.$$
\end{lem}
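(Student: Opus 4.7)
The plan is to place $A$ inside a generalised arithmetic progression of bounded rank via Freiman's theorem for torsion-free abelian groups, and then read off the parallel lines from one coordinate direction of that progression.

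First, since $|A+A|\leq cn$, Freiman's theorem gives a proper generalised arithmetic progression
$$P=\{v_{0}+x_{1}w_{1}+\dots+x_{D}w_{D}:0\leq x_{i}<N_{i}\}$$
of rank $D=D(c)$ and size $|P|\leq K(c)n$, containing $A$, where $D(c)$ and $K(c)$ depend only on $c$. Relabel the generators so that $N_{D}=\max_{i}N_{i}$; then $N_{D}\geq|P|^{1/D}\geq n^{1/D}$. I would take the common direction in the conclusion to be $w_{D}$, let $\mathcal L$ be the collection of affine lines in $\RR^{d}$ parallel to $w_{D}$ that meet $A$, and note that projecting $P$ modulo $\RR w_{D}$ yields $|\mathcal L|\leq |P|/N_{D}\leq K(c)n^{1-1/D}$.

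Next, writing $a(l)=|A\cap l|$ for $l\in\mathcal L$ and ordering the fibers as $a(l_{1})\geq a(l_{2})\geq\dots$, set $\sigma=1/(2D)$ and $C=\sqrt{K(c)}$. Averaging gives
$$a(l_{1})\geq\frac{n}{|\mathcal L|}\geq\frac{n^{1/D}}{K(c)}=\frac{n^{2\sigma}}{K(c)},$$
so any line with $a(l)\geq\sqrt{a(l_{1})}$ automatically satisfies $a(l)\geq C^{-1}n^{\sigma}$. Define $l_{1},\dots,l_{r}$ to be the longest initial segment with $a(l_{r})\geq\sqrt{a(l_{1})}$; both of the size conditions in the conclusion then hold by construction.

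The remaining ingredient, and the main obstacle, is the coverage bound. The uncovered mass is at most $|\mathcal L|\cdot\sqrt{a(l_{1})}\leq K(c)n\cdot\sqrt{a(l_{1})}/N_{D}$. If one can establish an upper bound $a(l_{1})\leq C'N_{D}$ with $C'=C'(c)$, the right-hand side is at most $K(c)\sqrt{C'}\cdot n/\sqrt{N_{D}}\leq K(c)\sqrt{C'}\cdot n^{1-1/(2D)}$, which is the desired $Ccn^{1-\sigma}$ after adjusting constants. The bound $a(l_{1})\leq C'N_{D}$ holds automatically when the projections $\bar w_{1},\dots,\bar w_{D-1}$ in $\RR^{d}/\RR w_{D}$ are $\QQ$-linearly independent, because then each line in $\mathcal L$ is the image of a single arithmetic progression of length $N_{D}$ inside $P$. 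The hardest part of the argument is the situation in which several such progressions collapse onto the same line: this forces a refinement of the generalised progression, either by replacing $P$ with a sub-progression on which the projection modulo $\RR w_{D}$ is injective, or by an iterative compression argument that absorbs the multiplicity loss into the constants $C$ and $\sigma$. An alternative route that bypasses Freiman uses Plünnecke--Ruzsa to locate a direction $v$ with $\sum_{l\parallel v}|A\cap l|^{2}\geq n/c^{2}$ and then performs a dyadic pigeonhole on fiber sizes, but the Freiman route gives cleaner control on $|\mathcal L|$ and hence on the coverage exponent $\sigma$.
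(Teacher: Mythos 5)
The paper offers no proof of this statement to compare against: it is imported verbatim from Mudgal~\cite{M19} (Lemma 3.2 there), so your proposal has to be judged on its own merits. On those merits, the scaffolding is fine but the decisive step is missing, and you say so yourself. The Freiman step (valid in the torsion-free group generated by $A$), the count $|\mathcal{L}|\leq |P|/N_D\leq K(c)n^{1-1/D}$ (correct because properness of $P$ makes the $\geq N_D$ points of $P$ on each line distinct), the choices $\sigma=1/(2D)$ and $C=\sqrt{K(c)}$, and the chain condition for the initial segment all check out. But you have correctly reduced the whole lemma to the max-fibre bound $a(l_1)\leq C'N_D$, and that bound is exactly where the content lies: properness of $P$ in $\RR^d$ says nothing about the projection $\pi$ modulo $\RR w_D$. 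There may be integer vectors $(c_1,\ldots,c_{D-1})$ with $|c_i|<N_i$ and $\sum_{i<D}c_iw_i\in\RR w_D\setminus\{0\}$, in which case a single line carries many fibres, $a(l_1)$ can be as large as $N_D$ times the product of the collapsed side lengths, and your estimate $|\mathcal{L}|\sqrt{a(l_1)}$ for the uncovered mass is no longer $O(n^{1-\sigma})$. (The lemma itself survives in such cases --- collapse effectively lengthens the lines and shrinks $|\mathcal{L}|$, e.g.\ by grouping the collapsed generators with $w_D$ and inducting on the rank, or by analysing the representation function of the projected box --- but none of this is carried out, and it is the real work.)

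Moreover, the two repairs you sketch do not obviously close the gap. The first, replacing $P$ by a sub-progression on which $\pi$ is injective, fails as stated: the containment $A\subseteq P$ must be preserved, and a sub-progression chosen to kill the collapse relations will in general discard points of $A$. The natural correction --- embedding $\pi(P)$ in a proper progression in the quotient $\RR^d/\RR w_D$ --- does not simply lift back to $\RR^d$, because collapsed fibres sit on their common line as translates of a $w_D$-progression by mutually incommensurable offsets, so the lifted object is not a GAP containing $A$ with controlled fibre structure. The second repair, an ``iterative compression argument,'' is too vague to evaluate. Finally, your parenthetical alternative route is vacuous as written: since $|A+A|\geq |A|$ forces $c\geq 1$ whenever the hypothesis is non-vacuous, the inequality $\sum_{l\parallel v}|A\cap l|^2\geq \sum_{l\parallel v}|A\cap l|=n\geq n/c^2$ holds for \emph{every} direction $v$ and carries no directional information, so the dyadic pigeonhole has nothing to bite on without a genuinely stronger energy input.
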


We are now ready to prove Theorem~\ref{thm:main}, which, we recall, states that if $d \geq 2$ and $A \subset \RR^d$ is a finite set such that $\dim(A) = d$, then, provided $|A|$ is sufficiently large,
$$|A - A| \geq \paren{2d - 2+\frac{1}{d-1}}|A| - (2d^2-4d+3).$$

\begin{proof}[Proof of Theorem~\ref{thm:main}]
We shall proceed by induction on $d$, starting from the known case $d=2$~\cite{FHU90}. We will suppose throughout that $n_0$ is large enough for our arguments to hold. Our aim is to show that, for all $A \subset \RR^d$ with $\dim(A) = d$,
$$|A - A| \geq \paren{2d - 2+\frac{1}{d-1}}|A| - \max(2d^2-4d+3, D-|A|/3),$$
where $D\geq 2d^2-4d+3$ is chosen so that the above inequality trivially holds for $|A|\leq n_0$. The result then clearly follows for $|A|$ sufficiently large. We will proceed by induction on $|A|$, where the base case $|A|\leq n_0$ trivially holds.

We may clearly assume that $|A-A|\leq (2d-1)|A|$, since otherwise we already have the required conclusion. By the Pl\"unnecke--Ruzsa inequality, we then have $|A+A|\leq (2d-1)^2|A|$. Applying Lemma~\ref{lem:mudgal} with $c=(2d-1)^2$, we get parallel lines $l_1,\ldots,l_r$ and constants $0<\sigma\leq 1/2$ and $C>0$ such that 
$$|A \cap l_1| \geq \cdots \geq |A \cap l_r| \geq |A \cap l_1|^{1/2} \geq C^{-1}n^\sigma$$
and
$$|A \setminus (l_1 \cup l_2 \cup \cdots \cup l_r)| < Ccn^{1-\sigma},$$
where $n=|A|$. Since $|A\cap l_i|\geq C^{-1}n^\sigma$ for each $i$, we have $n=|A|\geq rC^{-1}n^\sigma$ or $r\leq Cn^{1-\sigma}$. Let $A'=A\cap (l_1\cup \cdots \cup l_r)$ and $S=A\setminus A'$, so that $|S|<Ccn^{1-\sigma}$. If $\dim(A')=d_1<d$, then, by our induction hypothesis, for $|A|$ sufficiently large, 
$$|A'-A'|\geq \paren{2d_1 - 2+\frac{1}{d_1-1}}|A'| - (2d_1^2-4d_1+3).$$
There are $a_1,\ldots,a_{d-d_1}\in S$ such that $\dim(A'\cup\set{a_1,\ldots,a_{d-d_1}})=d$. This implies that $a_1,\ldots,a_{d-d_1}$ lie outside the affine span of $A'$, so the sets
$$A'-A',A'-a_1,\ldots,A'-a_{d-d_1},a_1-A',\ldots,a_{d-d_1}-A'$$
are pairwise disjoint. Thus,
\begin{align*}
|A-A| &\geq |A'-A'|+\sum_{i=1}^{d-d_1}(|A'-a_i|+|a_i-A'|)\\
&\geq \paren{2d_1-2+\frac{1}{d_1-1}}|A'|-(2d_1^2-4d_1+3)+2(d-d_1)|A'|\\
&\geq \paren{2d-2+\frac{1}{d_1-1}}(|A|-|S|)-(2d_1^2-4d_1+3)\\
&\geq \paren{2d-2+\frac{1}{d-1}}|A|
\end{align*}
for $|A|\geq n_0$  sufficiently large. Thus, we may assume that $\dim(A')=d$.

For $n_0$ sufficiently large, we may assume that each line $l_i$ intersects $A'$ in at least $4d$ points. Let $H$ be a major hyperplane of $A'$ with respect to $l_1$ and let $H_1=H,H_2,\ldots,H_r$ be the translates of $H$ covering $A'$ in the natural order. Fix $0<\epsilon<1/(4d+1)(d-1)$. If we are in the case of Lemma~\ref{lem:lines} where
$$|A'-A'|\geq \paren{2d - 2+\frac{1}{d-1}+\epsilon}|A'|-C_d,$$
then, since $|S|=O(|A|^{1-\sigma})$ is sublinear, for $|A|$ sufficiently large,
\begin{align*}
    |A-A| &\geq |A'-A'|\\
    &\geq \paren{2d - 2+\frac{1}{d-1}+\epsilon}|A'|-C_d\\
    &\geq \paren{2d - 2+\frac{1}{d-1}}|A|.
\end{align*}
Thus, we may assume that $r=2$ and 
\[|A'-A'| \geq (2d-2)|A'|+\frac{2}{d-1}|A_1'|-(2d^2-4d+3).\]

Let $A_1'=A'\cap H_1$ and $A_2'=A'\cap H_2$. If $S\not\subseteq H_1\cup H_2$, then there is a point $x\in S$ not lying on the hyperplanes $H_1,H_2$. But then $x-A_i',A_i'-x,A'-A'$ are pairwise disjoint for some $i\in \set{1,2}$ and so, since $|A'_1| \geq |A'_2|$, 
\begin{align*}
    |A-A| &\geq |A'-A'|+2|A_2'|\\
    &\geq (2d-2)|A'|+\frac{2}{d-1}|A_1'|-(2d^2-4d+3)+2|A_2'|\\
    &\geq \paren{2d-2+\frac{2}{d-1}}|A'|-(2d^2-4d+3)\\
    &\geq \paren{2d-2+\frac{1}{d-1}}|A|.
\end{align*}
We may therefore assume that $S\subseteq H_1\cup H_2$.

Let $A_1=A\cap H_1$ and $A_2=A\cap H_2$. Let $H'$ be a major hyperplane of $A$ with respect to $l_1$ 
(possibly equal to $H$) and $H_1'=H',H_2',\ldots,H_s'$ be the translates of $H'$ covering $A$, ordered naturally. Let $B_i=A\cap H_i'$ for $i=1,\ldots,s$. Since $H_1,H_2$ are both supporting hyperplanes of $A$, we must have $|B_1|\geq \max(|A_1|,|A_2|)\geq |A|/2 >|S|$, so $B_1$ must contain at least one point of $A'$. Hence, $B_1$ contains one of the lines $l_i\cap A$, each of which has at least 2 points, and so $\dim (B_1)=d-1$.  

Suppose $s=2$. The number of lines parallel to $l_1$ intersecting $B_1$ is at most $r+|S|=O(|A|^{1-\sigma})$, 
which is smaller than $|B_1|/4$. Thus, for $n_0$ sufficiently large, by both cases of Lemma~\ref{lem:2planes},
$$|A-A|\geq \paren{2d-2+\frac{1}{d-1}}|A|-(2d^2-4d+3).$$

We may therefore assume that $s>2$. Let $B=A\setminus B_s$, noting that $|B|\geq |A|/2$ and $\dim(B)=d$. By our induction hypothesis,
$$|B-B|\geq \paren{2d-2+\frac{1}{d-1}}|B|-D.$$
Thus, again using Ruzsa's inequality~\eqref{eqn:ruzsa},
\begin{align*}
    |A-A| &\geq |B-B|+2|B_1-B_s|\\
    &\geq \paren{2d-2+\frac{1}{d-1}}|B|-D+2|B_1|+(2d-2)|B_s|-d(d-1)\\
    &\geq \paren{2d-2+\frac{1}{d-1}}|A|+\paren{2-\frac{1}{d-1}}|B_1|-d(d-1)-D\\
    &\geq \paren{2d-2+\frac{1}{d-1}}|A|+\paren{1-\frac{1}{2(d-1)}}|A|-d(d-1)-D\\
    &\geq \paren{2d-2+\frac{1}{d-1}}|A|-D+|A|/3,
\end{align*}
where the last inequality holds if $|A|/6\geq n_0/6\geq d(d-1)$.
\end{proof}

\section{Concluding remarks}

By carefully analysing our proof of Theorem~\ref{thm:main}, it is possible to deduce some structural properties of large sets $A \subset \RR^d$ with $\dim(A) = d$ and
$$|A-A|\leq \paren{2d-2+\frac{1}{d-1}}|A|+o(|A|).$$
In particular, such sets can be covered by two parallel hyperplanes $H_1$ and $H_2$, where, writing 
$A_1 = A \cap H_1$ and $A_2 = A \cap H_2$, we can assume that $A_1$ and $A_2$ have roughly the same size, differing by $o(|A|)$. We can also assume that $\dim(A_1) = d-1$ and that $A_1$ can be covered by $d-1$ parallel lines $l_1,\ldots,l_{d-1}$, where the sets $A_1\cap l_i$ all have approximately equal size, again up to $o(|A|)$.

In practice, $H_1$ will be a major hyperplane of $A$ with respect to $l_1$, which, we recall, means that it is parallel to $l_1$, it is supporting, in the sense that all points of $A$ lie either on or on one side of it, and $|H_1 \cap A|$ is as large as possible. Knowing this allows us to also deduce that $\dim (A_2)=d-1$. Indeed, it must be the case that the affine span of $A_2$ is parallel to $l_1$, since otherwise $|A_1 - A_2|$ would be too large. But then, if $\dim(A_2)<d-1$, there is a supporting hyperplane through $A_2$ and one of the $A_1\cap l_i$ which contains more points than  $H_1$, contradicting the fact that $H_1$ is a major hyperplane. Since $|A_1|$ and $|A_2|$ differ by $o(|A|)$, this then allows us to argue that $A_2$ is also covered by $d-1$ lines parallel to $l_1$ of approximately equal size.

In fact, we can deduce the very same structural properties for large sets $A\subset \RR^d$ with $\dim(A)=d$ and
$$|A-A|\leq \paren{2d-2+\frac{1}{d-1}+\epsilon}|A|+o(|A|)$$
for some $\epsilon>0$, giving a difference version of Stanchescu's result  about the structure of $d$-dimensional subsets of $\RR^d$ with doubling constant smaller than $d+ 4/3$, which we stated as  Theorem~\ref{thm:Stan}. It would be interesting to determine the maximum value of $\epsilon$ for which this continues to hold.

Unfortunately, our methods tell us very little about how $A_1$ and $A_2$ are related, though we suspect that $A_2$ should be close to a translate of $-A_1$. Proving this, which will likely require a better understanding of when Ruzsa's inequality~\eqref{eqn:ruzsa} is tight, may then lead to a determination of the exact structure of $d$-dimensional subsets $A$ of $\RR^d$ with $|A-A|$ as small as possible in terms of $|A|$, a problem that was already solved for $d = 2$ and $3$ by Stanchescu~\cite{S98}.

\vspace{3mm}
\noindent
{\bf Note added.} Shortly after completing this paper, we learned from Akshat Mudgal that he had independently proved an asymptotic version of Conjecture 1.1. We refer the reader to his paper~\cite{M21} for further details.

\end{document}